\newtheorem{theorem}{Theorem}[section]
\newtheorem{lemma}[theorem]{Lemma}
\theoremstyle{definition}
\newtheorem{remark}[theorem]{Remark}
\numberwithin{equation}{section}
\author[G. Hu]{Guoen Hu}
\address{Guoen Hu:  Department of Applied Mathematics, Zhengzhou Information Science and Technology Institute, Zhengzhou 450001,
 People's Republic of China}
\email{guoenxx@163.com}
\author[X. Tao]{Xiangxing Tao}
\address{Xiangxing Tao: Department of Mathematics, School of Science, Zhejiang University of Science and Technology,
Hangzhou 310023, People's Republic of China}
\email{xxtao@zust.edu.cn}
\thanks{The research of the first author was supported by the NNSF of
China (Nos. 11871108, 11971295), and the research of the second (corresponding)  author was supported by
the NNSF of
China (No. 11771399).}
\keywords{rough singular integral operator, commutator, weak type endpoint estimate}
\subjclass[2010]{42B20}
\begin{document}

\title[commutator]{An endpoint estimate for the  commutators of   singular integral operators with rough kernels}

\begin{abstract}
Let $\Omega$ be homogeneous of degree zero and have mean value zero on the unit sphere ${S}^{d-1}$, $T_{\Omega}$  be the homogeneous singular integral operator  with kernel $\frac{\Omega(x)}{|x|^d}$ and $T_{\Omega,\,b}$ be the commutator of $T_{\Omega}$ with symbol $b$. In this paper, we prove that if $\Omega\in L(\log L)^2(S^{d-1})$, then for $b\in {\rm BMO}(\mathbb{R}^d)$, $T_{\Omega,\,b}$ satisfies an endpoint estimate of $L\log L$ type.
\end{abstract}
\maketitle
\section{Introduction}
In this paper, we will work on $\mathbb{R}^d$, $d\geq 2$. Let $T$ be a linear operator from $\mathcal{S}(\mathbb{R}^d)$ to $\mathcal{S}'(\mathbb{R}^d)
$ and $b\in L^1_{{\rm loc}}(\mathbb{R}^d)$. The commutator of $T$ with symbol $b$, is defined by
$$T_bf(x)=b(x)Tf(x)-T(bf)(x).$$
A celebrated result of Coifman,  Rochberg and  Weiss \cite{crw} states  that if $T$ is a Calder\'on-Zygmund operator, then $T_{b}$ is bounded on $L^p(\mathbb{R}^d)$ for every $p\in (1,\,\infty)$ and also a converse result in
terms of the Riesz transforms. P\'erez \cite{perez1} considered the weak type endpoint estimate for the commutator of Calder\'on-Zygmund operator, and  proved the following result.
\begin{theorem}\label{thmperez}
Let $T$ be a Calder\'on-Zygmund operator and $b\in {\rm BMO}(\mathbb{R}^d)$. Then for any $\lambda>0$,
$$|\{x\in\mathbb{R}^d:\,|T_{b}f(x)|>\lambda\}|\lesssim\int_{\mathbb{R}^d}\Phi\big(\frac{|f(x)|}{\lambda}\big)dx,$$
where and in the following, $\Phi(t)=t\log({\rm e}+t)$.
\end{theorem}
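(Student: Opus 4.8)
The plan is to run a Calderón--Zygmund decomposition of $f$ at the relevant height and treat $T_b$ piece by piece, using on one side the $L^2$ boundedness of $T_b$ furnished by the Coifman--Rochberg--Weiss theorem, and on the other a careful analysis of the bad part in which the $L\log L$ character of the right-hand side is produced by the generalized Hölder inequality for the conjugate Young functions $\Phi$ and $\overline\Phi(t)\approx \mathrm e^{t}-1$. Since $T_b$ is linear in $f$, after replacing $b$ by $b/\|b\|_{\mathrm{BMO}}$ and $f$ by $f/\lambda$ I may assume $\lambda=1$, $\|b\|_{\mathrm{BMO}}=1$ and $f\ge 0$; if $\int_{\mathbb R^d}\Phi(f)=\infty$ there is nothing to prove, so I set $A:=\int_{\mathbb R^d}\Phi(f)<\infty$, which also bounds $\|f\|_1$ since $\Phi(t)\ge t$. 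I then perform the Calderón--Zygmund decomposition of $f$ at level $1$, obtaining pairwise disjoint cubes $\{Q_j\}$ with $1<|Q_j|^{-1}\int_{Q_j}f\le 2^{d}$, $f\le 1$ a.e. off $\Omega:=\bigcup_jQ_j$, and $\sum_j|Q_j|\le\int_\Omega f\le A$, and I write $f=g+h$ with $g=f\mathbf 1_{\Omega^{c}}+\sum_j f_{Q_j}\mathbf 1_{Q_j}$ and $h=\sum_j h_j$, $h_j=(f-f_{Q_j})\mathbf 1_{Q_j}$.

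For the good part, $\|g\|_\infty\lesssim 1$ and $\|g\|_1=\|f\|_1\le A$ give $\|g\|_2^{2}\lesssim A$, so the bound $\|T_bg\|_2\lesssim\|b\|_{\mathrm{BMO}}\|g\|_2$ and Chebyshev yield $|\{|T_bg|>1/2\}|\lesssim\|g\|_2^{2}\lesssim A$. For the bad part, put $Q_j^{*}=2\sqrt d\,Q_j$, $\Omega^{*}=\bigcup_jQ_j^{*}$; as $|\Omega^{*}|\lesssim\sum_j|Q_j|\lesssim A$, it suffices to bound $\int_{(\Omega^{*})^{c}}|T_bh|$. With $b_j:=|Q_j|^{-1}\int_{Q_j}b$ and the identity $T_bh_j=(b-b_j)Th_j-T((b-b_j)h_j)$, I split $T_bh=\mathrm I+\mathrm{II}$, $\mathrm I=\sum_j(b-b_j)Th_j$, $\mathrm{II}=-\sum_jT((b-b_j)h_j)$. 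For $\mathrm I$, the cancellation $\int h_j=0$ together with the Hölder regularity of the kernel gives, uniformly for $y\in Q_j$, $\int_{(Q_j^{*})^{c}}|K(x,y)-K(x,y_j)|\,|b(x)-b_j|\,dx\lesssim\sum_{k\ge1}(k+1)2^{-k\delta}\lesssim 1$, the factor $k+1$ arising from $|b_{2^{k}Q_j^{*}}-b_j|\lesssim k$; hence $\int_{(\Omega^{*})^{c}}|\mathrm I|\le\sum_j\int_{(Q_j^{*})^{c}}|b-b_j|\,|Th_j|\lesssim\sum_j\|h_j\|_1\lesssim\sum_j|Q_j|\lesssim A$.

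The term $\mathrm{II}$ is the main obstacle, since $(b-b_j)h_j$ is no longer of mean zero. The remedy is to subtract the average: write $(b-b_j)h_j=\psi_j+c_j\mathbf 1_{Q_j}$ with $c_j=|Q_j|^{-1}\int_{Q_j}(b-b_j)h_j$ and $\int_{Q_j}\psi_j=0$, so that $\mathrm{II}=-\sum_jT\psi_j-TG$ with $G=\sum_jc_j\mathbf 1_{Q_j}$. The quantitative heart of the matter is the generalized Hölder inequality combined with John--Nirenberg: for every $j$,
$$\|\psi_j\|_1+|c_j|\,|Q_j|\lesssim\int_{Q_j}|b-b_j|\,|f-f_{Q_j}|\lesssim|Q_j|\,\|b-b_j\|_{\exp L,Q_j}\,\|f-f_{Q_j}\|_{L\log L,Q_j}\lesssim\int_{Q_j}\Phi(f),$$
where I use $\|b-b_j\|_{\exp L,Q_j}\lesssim\|b\|_{\mathrm{BMO}}=1$ and the fact that $f_{Q_j}>1$ forces $|Q_j|^{-1}\int_{Q_j}\Phi(f)>1$, whence $\|f-f_{Q_j}\|_{L\log L,Q_j}\lesssim|Q_j|^{-1}\int_{Q_j}\Phi(f)$. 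Summing gives $\sum_j\|\psi_j\|_1\lesssim A$ and $\|G\|_1\lesssim A$. For the mean-zero pieces the Hörmander condition gives $\int_{(Q_j^{*})^{c}}|T\psi_j|\lesssim\|\psi_j\|_1$, so $\int_{(\Omega^{*})^{c}}\big|\sum_jT\psi_j\big|\lesssim\sum_j\|\psi_j\|_1\lesssim A$, while for $G$ the weak $(1,1)$ boundedness of $T$ yields $|\{|TG|>1/8\}|\lesssim\|G\|_1\lesssim A$. Combining the exceptional sets through Chebyshev completes the argument. Apart from this splitting of $\mathrm{II}$ and the verification of the $L\log L$--$\exp L$ Hölder estimate with the correct constants — which I expect to be the only genuinely delicate point — the rest is a routine Calderón--Zygmund computation.
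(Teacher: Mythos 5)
Your argument is correct, but note that the paper does not prove this statement at all: it is quoted from P\'erez \cite{perez1}, whose original proof runs through pointwise estimates with the sharp maximal function and $M_{L\log L}$ (weighted theory), not through a direct Calder\'on--Zygmund decomposition. What you wrote is the standard direct proof, and it is worth observing that it is structurally the same scheme the paper itself uses for its main Theorem \ref{thm1.2}: Calder\'on--Zygmund decomposition, $L^2$ boundedness of the commutator for the good part, the splitting $T_bh=\sum_j(b-b_j)Th_j-T\big(\sum_j(b-b_j)h_j\big)$, and the ${\rm exp}\,L$--$L\log L$ H\"older inequality plus John--Nirenberg (the paper's inequality (\ref{eq1.final})) to control $\|(b-b_j)h_j\|_{L^1}$ by $\int_{Q_j}\Phi(|f|)$; the difference is that for a genuine Calder\'on--Zygmund kernel the H\"older regularity handles the term $\sum_j(b-b_j)Th_j$ directly (your $(k+1)2^{-k\delta}$ summation), whereas for rough $\Omega$ this is exactly the part requiring Sections 3--5 of the paper. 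Two small remarks: your extra step of writing $(b-b_j)h_j=\psi_j+c_j\chi_{Q_j}$ is not needed --- since you already have $\sum_j\|(b-b_j)h_j\|_{L^1}\lesssim\int\Phi(|f|)$, you can apply the weak $(1,1)$ bound of $T$ to the whole sum, which is what the paper does in (\ref{equation3.14}); and your key Luxemburg-norm bound $\|f-f_{Q_j}\|_{L\log L,Q_j}\lesssim |Q_j|^{-1}\int_{Q_j}\Phi(f)$ does hold, via $\Phi(t/\lambda)\le\Phi(t)/\lambda$ for $\lambda\ge1$ together with $|Q_j|^{-1}\int_{Q_j}\Phi(f)>1$, so that step is sound (the final constant then depends on $\|b\|_{{\rm BMO}}$ through your normalization and the doubling of $\Phi$, which is consistent with the statement).
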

Let $\Omega$ be
homogeneous of degree zero, integrable and have mean value zero on
the unit sphere ${S}^{d-1}$. Define the  singular integral operator
${T}_{\Omega}$ by
\begin{eqnarray}\label{eq1.1}{T}_{\Omega}f(x)={\rm p.\,v.}\int_{\mathbb{R}^d} \frac
{\Omega( y')}{|y|^d}f(x-y)dy,\end{eqnarray}  where and in the following, $y'=y/|y|$ for $y\in\mathbb{R}^d$. This operator was introduced by
Calder\'on and Zygmund \cite{cz1}, and has been proved to be bounded on $L^p(\mathbb{R}^d)$, $ 1< p<\infty$, under various
assumptions on the homogeneous function $\Omega$. For instance,
Calder\'on and Zygmund \cite {cz2} proved that
if $\Omega\in L\log L({S}^{d-1})$, then $T_{\Omega}$ is bounded on
$L^p(\mathbb{R}^d)$ for $p\in (1,\,\infty)$.   Ricci
and Weiss \cite{rw} improved the result of Calder\'on-Zygmund, and
showed that $\Omega\in H^1(S^{d-1})$ guarantees the
$L^p(\mathbb{R}^d)$ boundedness on $L^p(\mathbb{R}^d)$ for $p\in
(1,\,\infty)$. Seeger \cite{se} showed that $\Omega\in L\log
L(S^{d-1})$ is a sufficient condition such that $T_{\Omega}$ is bounded
from $L^1(\mathbb{R}^d)$ to $L^{1,\,\infty}(\mathbb{R}^d)$.
For other works about the mapping properties of $T_{\Omega}$, we refer to the papers
\cite{chr2,duo,drf,ler4,rw} and the references therein.

We now consider the commutator of $T_{\Omega}$ with symbol in ${\rm BMO}(\mathbb{R}^d)$.
Let $p\in [1,\,\infty)$ and $w$ be a nonnegative, locally integrable function on $\mathbb{R}^d$. We say that   $w\in A_{p}(\mathbb{R}^d)$ if
$$[w]_{A_p}=\sup_{Q}\Big(\frac{1}{|Q|}\int_Qw(x)dx\Big)\Big(\frac{1}{|Q|}\int_{Q}w^{1-p'}(x)dx\Big)^{p-1}<\infty,\,\,\,p\in (1,\,\infty),$$
the  supremum is taken over all cubes in $\mathbb{R}^d$,  $p'=p/(p-1)$, and $w\in A_1(\mathbb{R}^d)$ if
$${\rm ess}\sup_{x\in\mathbb{R}^d}\frac{Mw(x)}{w(x)}<\infty,$$ see \cite[Chapter 9]{gra} for the properties of $A_p(\mathbb{R}^d)$.
By the result of Duandikoetxea and Rubio de Francia \cite{drf} (see also \cite{duo}), we know that if $\Omega\in L^q(S^{d-1})$ for some $q\in (1,\,\infty]$, then for $p\in (q',\,\infty)$ and $w\in A_{p/q'}(\mathbb{R}^d)$
$$\|T_{\Omega}f\|_{L^p(\mathbb{R}^d,\,w)}\lesssim_{d,p,w}\|f\|_{L^p(\mathbb{R}^d,\,w)}.$$
This, together with \cite[Theorem 2.13]{abkp}, tells us that if $\Omega\in L^q(S^{d-1})$ for $q\in (1,\,\infty]$, then for $b\in {\rm BMO}(\mathbb{R}^d)$,
$$\|T_{\Omega,\,b}f\|_{L^p(\mathbb{R}^d,\,w)}\lesssim_{d,p,w}\|b\|_{{\rm BMO}(\mathbb{R}^d)}\|f\|_{L^p(\mathbb{R}^d,\,w)},\,\,p\in (q',\,\infty),\,\,w\in A_{p/q'}(\mathbb{R}^d).$$
Hu \cite{hu1} proved that if $\Omega\in L(\log L)^2(S^{d-1})$, then $T_{\Omega,\,b}$ is bounded on $L^p(\mathbb{R}^d)$ for all $p\in (1,\,\infty)$, see also \cite{hsw} for the $L^p(\mathbb{R}^d)$ boundedness of $T_{\Omega,\,b}$ when $\Omega$ satisfies another minimum size condition.

The weak type endpoint estimates of $T_{\Omega,\,b}$ are of interest. By Theorem \ref{thmperez}, we know that if $\Omega\in {\rm Lip}_{\alpha}(S^{d-1})$ with $\alpha\in (0,\,1]$ and $b\in {\rm BMO}(\mathbb{R}^d)$, then for any $\lambda>0$,
\begin{eqnarray}\label{eq1.2}|\{x\in\mathbb{R}^d:\,|T_{\Omega,\,b}f(x)|>\lambda\}|\lesssim\int_{\mathbb{R}^d}\Phi\big(\frac{|f(x)|}{\lambda}\big)dx.\end{eqnarray}
Recently, Lan, Tao and Hu \cite{lth} established the  weak type endpoint estimates for $T_{\Omega,\,b}$ when $\Omega$ satisfies only size condition. They proved that

\begin{theorem}\label{thm1.1}
Let $\Omega$ be homogeneous of degree zero and have mean value zero on $S^{d-1}$, $b\in {\rm BMO}(\mathbb{R}^d)$. Suppose that $\Omega\in L^{q}(S^{d-1})$  for some $q\in (1,\,\infty]$, then for any   $\lambda>0$ and weight $w$ such that $w^{q'}\in A_1(\mathbb{R}^d)$,
\begin{eqnarray*}&&w\big(\{x\in\mathbb{R}^d:\,|T_{\Omega,\,b}f(x)|>\lambda\}\big)\lesssim_{d,\,w}\int_{\mathbb{R}^d}\Phi\big(\frac{D|f(x)|}{\lambda}\big)w(x)dx,
\end{eqnarray*}
with $D=\|\Omega\|_{L^{q}(S^{d-1})}\|b\|_{{\rm BMO}(\mathbb{R}^d)}.$
\end{theorem}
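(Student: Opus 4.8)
The plan is to run the Calderón--Zygmund/Pérez scheme, compensating the roughness of $\Omega$ by $L^{2}$ (Fourier--side) estimates for the dyadic pieces of its kernel. By the homogeneity of the estimate in $\lambda$ and in $\Omega,b$, I may assume $\lambda=1$ and $\|\Omega\|_{L^{q}(S^{d-1})}=\|b\|_{{\rm BMO}(\mathbb{R}^{d})}=1$, that $\int_{\mathbb{R}^{d}}\Phi(|f|)w<\infty$, and, by splitting $f$ into positive and negative, real and imaginary parts, truncating $b$, and a routine limiting argument, that $0\le f\in L^{\infty}(\mathbb{R}^{d})$ has compact support and $b\in L^{\infty}(\mathbb{R}^{d})$. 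Two remarks on the weight are used throughout. First, from $\frac{1}{|Q|}\int_{Q}w\le\big(\frac{1}{|Q|}\int_{Q}w^{q'}\big)^{1/q'}$ and $w^{q'}\in A_{1}(\mathbb{R}^{d})$ one gets $w\in A_{1}(\mathbb{R}^{d})$ and in fact $w\in A_{1+1/q'}(\mathbb{R}^{d})$; hence with $p:=q'+1>q'$ the operator $T_{\Omega,b}$ is bounded on $L^{p}(\mathbb{R}^{d},w)$ by the Duoandikoetxea--Rubio de Francia weighted bound for $T_{\Omega}$ \cite{drf} together with \cite[Theorem 2.13]{abkp}. Second, $w^{q'}\in A_{1}$ gives $\big(\int_{2^{k}Q}w^{q'}\big)^{1/q'}\lesssim(2^{k}\ell(Q))^{d/q'}\,{\rm ess}\inf_{Q}w$ for every cube $Q$ and $k\ge0$, an estimate designed to be paired, through Hölder, with the $L^{q}(S^{d-1})$ size of $\Omega$ against the kernel.

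Apply the Calderón--Zygmund decomposition to $f$ at height $1$: there are pairwise disjoint dyadic cubes $\{Q_{j}\}$ with $1<\frac{1}{|Q_{j}|}\int_{Q_{j}}f\le2^{d}$, and $f=g+h$ with $h=\sum_{j}h_{j}$, $h_{j}=(f-f_{Q_{j}})\chi_{Q_{j}}$, $0\le g\lesssim1$ off $E=\bigcup_{j}Q_{j}$ and $g=f_{Q_{j}}$ on $Q_{j}$. Since $|Q_{j}|<\int_{Q_{j}}f$ and $w\in A_{1}$, $w(Q_{j})\lesssim|Q_{j}|\,{\rm ess}\inf_{Q_{j}}w\le\big(\int_{Q_{j}}f\big)\,{\rm ess}\inf_{Q_{j}}w\le\int_{Q_{j}}\Phi(f)\,w$, so that $\sum_{j}w(2Q_{j})\lesssim\sum_{j}w(Q_{j})\lesssim\int\Phi(f)w$. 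For the good part, $\int g^{p}w\lesssim\int_{E^{c}}fw+\sum_{j}w(Q_{j})\lesssim\int\Phi(f)w$ (using $0\le f\le1$ a.e.\ on $E^{c}$ and $f_{Q_{j}}\le2^{d}$), so Chebyshev and the $L^{p}(w)$-boundedness give $w(\{|T_{\Omega,b}g|>1/2\})\lesssim\int\Phi(f)w$. For the bad part, put $\widetilde E=\bigcup_{j}2Q_{j}$; after discarding $w(\widetilde E)\lesssim\int\Phi(f)w$ it suffices to estimate $|T_{\Omega,b}h|$ on $\widetilde E^{c}$, where
\[
T_{\Omega,b}h=\sum_{j}(b-b_{Q_{j}})T_{\Omega}h_{j}-T_{\Omega}\Big(\sum_{j}(b-b_{Q_{j}})h_{j}\Big)=:{\rm A}+{\rm B}.
\]

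The term ${\rm B}$ is treated globally. With $G=\sum_{j}(b-b_{Q_{j}})h_{j}$, the generalized Hölder inequality and the John--Nirenberg inequality for $w$-weighted averages (available since $w\in A_{\infty}$) give $\|G\|_{L^{1}(w)}\lesssim\sum_{j}\big(w(Q_{j})+\int_{Q_{j}}\Phi(|f|)w\big)\lesssim\int\Phi(|f|)w$; hence $w(\{|{\rm B}|>1/4\})\lesssim\int\Phi(|f|)w$ follows from the weighted weak-type $(1,1)$ bound $\|T_{\Omega}\phi\|_{L^{1,\infty}(\mathbb{R}^{d},w)}\lesssim\|\phi\|_{L^{1}(\mathbb{R}^{d},w)}$ for $w^{q'}\in A_{1}$ --- itself part of the $L^{q}$-kernel endpoint theory and obtainable by adapting Seeger's argument \cite{se} to the weight.

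The term ${\rm A}$ is the crux, and I expect it to be the main obstacle, because $(b-b_{Q_{j}})T_{\Omega}h_{j}$ is not of the form $T_{\Omega}(\cdot)$, so no weak-type bound for $T_{\Omega}$ can be used directly: one must control $T_{\Omega}h_{j}$ on $(2Q_{j})^{c}$, and there the lack of any smoothness of $\Omega$ --- the bare $L^{q}$ size condition does not even yield the scalar Hörmander condition for the kernel --- rules out the usual second--difference gain. I would manufacture the off--diagonal decay from an $L^{2}$ estimate. Decompose the kernel dyadically, $T_{\Omega}=\sum_{s}K_{s}*\,\cdot\,$ with $K_{s}$ supported in $\{2^{s}\le|y|<2^{s+1}\}$, and decompose each $h_{j}$ along the level sets $E_{j,m}=\{x\in Q_{j}:|f-f_{Q_{j}}|\sim2^{m}\}$, writing $h_{j}=\sum_{m\ge0}h_{j,m}$ with $\operatorname{supp}h_{j,m}\subset Q_{j}$, $\int h_{j,m}=0$, $\|h_{j,m}\|_{1}\lesssim2^{m}|E_{j,m}|$ and $\|h_{j,m}\|_{2}\lesssim2^{m}|E_{j,m}|^{1/2}$. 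On the annulus $2^{k+1}Q_{j}\setminus2^{k}Q_{j}$ only the scale $2^{s}\sim2^{k}\ell(Q_{j})$ is relevant, and Plancherel together with the Fourier bounds $|\widehat{K_{s}}(\xi)|\lesssim\min\{2^{s}|\xi|,(2^{s}|\xi|)^{-\delta}\}$ --- the small--frequency bound from $\int_{S^{d-1}}\Omega=0$, the decaying one being the oscillatory estimate of Duoandikoetxea--Rubio de Francia, which is exactly where $q>1$ enters --- and $|\widehat{h_{j,m}}(\xi)|\lesssim\min\{1,\ell(Q_{j})|\xi|\}\|h_{j,m}\|_{1}$ yield $\|K_{s}*h_{j,m}\|_{L^{2}}\lesssim2^{-\delta k}2^{m}|E_{j,m}|^{1/2}$. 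One passes to the weighted bound on the annulus by Hölder: $|b-b_{Q_{j}}|$ is split as $|b-b_{2^{k}Q_{j}}|+Ck$, the first summand paired, through the generalized Hölder inequality, in $\exp L\times L\log L$ (John--Nirenberg) with an $L\log L$ bound for $K_{s}*h_{j,m}$ on the annulus coming from the $L^{2}$ bound --- this pairing is the source of the $L\log L$, rather than $L^{1}$, right--hand side --- while the weight is absorbed through the $A_{1}$ property of $w^{q'}$ from the reductions. Summing geometrically in $k$ and $s$ (using $\sum_{k}k\,2^{-\delta k}<\infty$) and then in $m$ and $j$ against $\sum_{j,m}m\,2^{m}|E_{j,m}|\,{\rm ess}\inf_{Q_{j}}w\lesssim\int\Phi(|f|)w$ closes the estimate. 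The real work lies in making the $L^{2}$-based decay coexist both with the weight (which forces the $w^{q'}$-pairing, hence the precise hypothesis $w^{q'}\in A_{1}$) and with the unbounded factor $b-b_{Q_{j}}$, and in keeping the $m,j$-sum convergent after the $\exp L\times L\log L$ pairing.
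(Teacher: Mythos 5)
A point of bookkeeping first: the paper does not prove Theorem \ref{thm1.1} at all; it is quoted from Lan--Tao--Hu \cite{lth}, and the Remark after Theorem \ref{thm1.2} records that the crucial ingredient of that proof is the weak type $(1,1)$ bound (\ref{eq1.6}) for Lerner's maximal operator $\mathscr{M}_{r,\,T_{\Omega}}$, which is available precisely because $\Omega\in L^q(S^{d-1})$ with $q>1$. Your proposal follows a different route (Calder\'on--Zygmund decomposition plus per-cube Fourier-transform $L^2$ estimates), so it must stand on its own; your reductions, the good part, and the term ${\rm B}$ are plausible (though the weighted weak $(1,1)$ bound for $T_{\Omega}$ under $w^{q'}\in A_1$ that you invoke for ${\rm B}$ is itself a nontrivial theorem, not a routine adaptation of \cite{se}, and needs a precise citation or proof).

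The genuine gap is at the crux term ${\rm A}=\sum_j(b-\langle b\rangle_{Q_j})T_{\Omega}h_j$, exactly where you anticipated trouble. Your only quantitative input on the annulus $A_k=2^{k+1}Q_j\setminus 2^kQ_j$ is the unweighted bound $\|K_s*h_{j,m}\|_{L^2}\lesssim 2^{-\delta k}2^m|E_{j,m}|^{1/2}$, where $\delta$ comes from the Fourier decay of $\widehat{K_s}$; for $\Omega\in L^q(S^{d-1})$ this decay is of order $(2^s|\xi|)^{-1/q'}$ at best, so $\delta\le 1$. But every way of converting that $L^2$ bound into the quantity you actually need on $A_k$ --- $\int_{A_k}|b-\langle b\rangle_{2^kQ_j}|\,|K_s*h_{j,m}|\,w\,dx$, whether by Cauchy--Schwarz against $|A_k|$, by H\"older against $\big(\int_{A_k}|b-\langle b\rangle_{2^kQ_j}|^{q'}w^{q'}\big)^{1/q'}\lesssim |2^kQ_j|^{1/q'}\operatorname{ess\,inf}_{Q_j}w$, or through your $\exp L\times L\log L$ pairing with normalized Luxemburg norms --- necessarily pays a factor comparable to $|2^kQ_j|^{1/2}\sim 2^{kd/2}|Q_j|^{1/2}$, because the $L^2$ estimate says nothing about where on the large annulus the mass of $K_s*h_{j,m}$ lives. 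The resulting series is $\sum_k k\,2^{(d/2-\delta)k}$, which diverges for every $d\ge2$ and every admissible $\delta$; the step ``summing geometrically in $k$ using $\sum_k k2^{-\delta k}<\infty$'' is therefore not available --- you have not gained $2^{-\delta k}$ per annulus, you have lost $2^{(d/2-\delta)k}$. This is the well-known obstruction for rough kernels: a per-cube Fourier/$L^2$ estimate alone cannot give (weighted) weak $(1,1)$ information off the expanded cubes, and all existing arguments add a further mechanism --- either almost-orthogonality across cubes and scales combined with a microlocal (directional) decomposition, as in Seeger \cite{se} and Ding--Lai \cite{dinglai}, which is exactly the heavy machinery this paper deploys (Lemmas \ref{lem3.2}--\ref{lem4.5}) for its own Theorem \ref{thm1.2}; or, for Theorem \ref{thm1.1} itself, the route of \cite{lth}, where the weak $(1,1)$ bound (\ref{eq1.6}) for $\mathscr{M}_{r,\,T_{\Omega}}$ provides the local $L^r$ average of $T_{\Omega}(h\chi_{\mathbb{R}^d\setminus 3Q})$ on each cube, which is then paired with $b-\langle b\rangle_Q$ via John--Nirenberg with no annulus-measure loss. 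Your sketch contains neither ingredient, so the main estimate does not close as written.
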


The purpose of this paper is to give a weak type endpoint estimate of $T_{\Omega,\,b}$ when $\Omega$ satisfies certain minimum size condition. For a function $\Omega$ on $S^{d-1}$ and $\kappa\geq 0$, we say that $\Omega\in L(\log L)^{\kappa}(S^{d-1})$, if
$$\|\Omega\|_{L(\log L)^{\kappa}(S^{d-1})}:=\int_{S^{d-1}}|\Omega(x')\log^{\kappa}({\rm e}+|\Omega(x')|)|dx'<\infty.$$
Our main result can be stated as follows.
\begin{theorem}\label{thm1.2}Let $\Omega$ be homogeneous of degree zero, have mean value zero on $S^{d-1}$ and $\Omega\in L(\log L)^2({S}^{d-1})$, $T_{\Omega}$ be the operator defined by (\ref{eq1.1}). Then for $b\in {\rm BMO}(\mathbb{R}^d)$ and $\lambda>0$,
\begin{eqnarray}\label{eq1.weak}|\{x\in\mathbb{R}^d:\, |T_{\Omega,\,b}f(x)|>\lambda\}|\lesssim \int_{\mathbb{R}^d}\Phi\big(\frac{|f(x)|}{\lambda}\big)dx.\end{eqnarray}
\end{theorem}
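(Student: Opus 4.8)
The plan is to reduce Theorem \ref{thm1.2} to Theorem \ref{thm1.1} by decomposing $\Omega$ according to its size and summing the resulting weak-type inequalities, the only genuine work being to organize the summation so that the non-homogeneity of $\Phi$ does no harm. Since $T_{\Omega,b}$ is linear and homogeneous in $\Omega$ and in $b$, I may normalize $\|b\|_{{\rm BMO}(\mathbb{R}^d)}=1$ and, after replacing $f$ by $f/\lambda$, take $\lambda=1$; by a routine approximation it then suffices to prove $|\{x\in\mathbb{R}^d:|T_{\Omega,b}f(x)|>1\}|\lesssim\int_{\mathbb{R}^d}\Phi(|f|)$ for bounded $f$ with compact support.

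\emph{Size decomposition of $\Omega$.} Put $E_0=\{x'\in S^{d-1}:|\Omega(x')|\le 2\}$ and $E_m=\{x'\in S^{d-1}:2^m<|\Omega(x')|\le 2^{m+1}\}$ for $m\ge 1$, set $a_m=|S^{d-1}|^{-1}\int_{S^{d-1}}\Omega\,\mathbf{1}_{E_m}$, and let $\Omega^{(m)}=\Omega\mathbf{1}_{E_m}-a_m$. Each $\Omega^{(m)}$ is then homogeneous of degree zero, has mean value zero on $S^{d-1}$, and $\|\Omega^{(m)}\|_{L^\infty(S^{d-1})}\lesssim 2^m$; moreover $\sum_{m\ge 0}\Omega^{(m)}=\Omega$ because $\sum_m a_m=|S^{d-1}|^{-1}\int_{S^{d-1}}\Omega=0$, the series converging in $L\log L(S^{d-1})$, so that $T_{\Omega,b}f=\sum_{m\ge 0}T_{\Omega^{(m)},b}f$ for the $f$ at hand. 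Consequently, for any positive $\eta_m$ with $\sum_m\eta_m\le 1$,
\begin{equation*}
|\{x:|T_{\Omega,b}f(x)|>1\}|\le\sum_{m\ge 0}|\{x:|T_{\Omega^{(m)},b}f(x)|>\eta_m\}| .
\end{equation*}

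\emph{Applying Theorem \ref{thm1.1} and summing.} Each $\Omega^{(m)}$ lies in $L^{q}(S^{d-1})$ for every $q\in(1,\infty]$, so I may apply Theorem \ref{thm1.1} with the weight $w\equiv 1$ (for which $w^{q'}=1\in A_1(\mathbb{R}^d)$ with $[1]_{A_1}=1$) and the exponent $q_m=1+\tfrac1{m+1}$; writing $D_m=\|\Omega^{(m)}\|_{L^{q_m}(S^{d-1})}$ and recalling $\|b\|_{{\rm BMO}}=1$ this gives
\begin{equation*}
|\{x:|T_{\Omega^{(m)},b}f(x)|>\eta_m\}|\lesssim_{d}\int_{\mathbb{R}^d}\Phi\Big(\frac{D_m|f(x)|}{\eta_m}\Big)dx ,
\end{equation*}
with an implicit constant independent of $m$, since the constant in Theorem \ref{thm1.1} with $w\equiv 1$ does not depend on $q$. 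From $\|\Omega^{(m)}\|_{L^{q_m}}^{q_m}\le\|\Omega^{(m)}\|_{L^\infty}^{q_m-1}\|\Omega^{(m)}\|_{L^1}$, $\|\Omega^{(m)}\|_{L^\infty}\lesssim 2^m$ and $q_m-1=\tfrac1{m+1}$, one gets $D_m\lesssim\big(\int_{E_m}|\Omega|\big)^{(m+1)/(m+2)}$, and a short case analysis depending on whether $\int_{E_m}|\Omega|\ge 2^{-(m+1)}$ or not shows $S:=\sum_{m\ge 0}(m+1)^2D_m\lesssim\|\Omega\|_{L(\log L)^2(S^{d-1})}+1<\infty$. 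With the choice $\eta_m=S^{-1}(m+1)^2D_m$ we have $\sum_m\eta_m=1$ and $D_m/\eta_m=S(m+1)^{-2}$, which is $\le 1$ for all but finitely many $m$ and satisfies $\sum_m D_m/\eta_m<\infty$; using $\Phi(at)\le a\Phi(t)$ for $0<a\le 1$ and $\Phi(at)\lesssim a^2\Phi(t)$ for $a\ge 1$ we obtain
\begin{equation*}
\sum_{m\ge 0}\int_{\mathbb{R}^d}\Phi\Big(\frac{D_m|f|}{\eta_m}\Big)\lesssim(S+S^2)\int_{\mathbb{R}^d}\Phi(|f|)\lesssim_{\Omega}\int_{\mathbb{R}^d}\Phi(|f|) .
\end{equation*}
Combining the three displays proves the estimate.

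\emph{Main obstacle.} No step here is analytically deep; the care lies entirely in the bookkeeping of the last paragraph. One must let the exponent $q_m$ tend to $1$: with any fixed $q>1$ the factor $\|\Omega^{(m)}\|_{L^\infty}^{q-1}\approx 2^{m(q-1)}$ grows geometrically and cannot be absorbed, so only the choice $q_m=1+O(1/m)$ keeps $\sum_m(m+1)^2D_m$ finite, and it is precisely in the interplay between this factor and the level $1$ partitioned among the pieces that the $L(\log L)^2$ strength of $\Omega$ is consumed (one less logarithm, i.e. $L\log L$, is what Seeger's weak $(1,1)$ theorem \cite{se} needs for $T_\Omega$ alone, and the commutator costs the extra logarithm). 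The one thing to check is that the constant in Theorem \ref{thm1.1} is genuinely independent of $q$ when $w\equiv 1$, as its statement asserts; should its proof reveal a hidden $q$-dependence, one would instead isolate that dependence, or fall back on a direct Calder\'on--Zygmund argument in the spirit of Seeger \cite{se} and Hu \cite{hu1} — good part by the $L^2(\mathbb{R}^d)$-boundedness of $T_{\Omega,b}$ from \cite{hu1}, bad part via the P\'erez splitting $T_{\Omega,b}h_Q=(b-b_Q)T_\Omega h_Q-T_\Omega\big((b-b_Q)h_Q\big)$ with pointwise kernel regularity replaced by $L^1$ and $L^2$ oscillation estimates on dyadic annuli.
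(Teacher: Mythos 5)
Your reduction stands or falls on a single claim: that the implicit constant in Theorem \ref{thm1.1}, applied with $w\equiv 1$ and exponent $q_m=1+\tfrac{1}{m+1}$, stays bounded as $q\to 1$. That claim is not justified, and the remark following Theorem \ref{thm1.2} in the paper explains why it should not be expected: the proof of Theorem \ref{thm1.1} in \cite{lth} rests on the estimate (\ref{eq1.6}) for $\mathscr{M}_{r,\,T_{\Omega}}$, whose constant is tied to $r$ and to $\Omega\in L^q(S^{d-1})$ with $q>1$ (already in Lerner's bound (\ref{eq1.5}) the constant grows in $r$), and such an estimate cannot survive the passage to $L(\log L)^2$, since it would force weighted $L^p$ bounds for $T_{\Omega}$ that contradict the Muckenhoupt--Wheeden example. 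So the subscript ``$\lesssim_{d,\,w}$'' hides a genuine $q$-dependence rather than certifying uniformity. Quantitatively, if the constant behaves like $(q')^{A}$ for some $A>0$, your choice $q_m'=m+2$ converts the requirement $\sum_m(m+1)^2D_m<\infty$ into $\sum_m(m+1)^{2+A}\beta_m<\infty$ with $\beta_m=\int_{E_m}|\Omega|$, i.e.\ you would need $\Omega\in L(\log L)^{2+A}(S^{d-1})$, and an exponential dependence on $q'$ kills the scheme altogether. Conversely, if the constant really were $q$-uniform, your own bookkeeping with weights $(m+1)^{1+\epsilon}$ in place of $(m+1)^2$ would prove the endpoint bound already for $\Omega\in L(\log L)^{1+\epsilon}(S^{d-1})$, a statement strictly stronger than the theorem and not obtainable from \cite{lth}; this is a strong signal that the uniformity you need is simply not available. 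So the only load-bearing step of the proposal is a gap.

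The fallback you mention in one sentence is, in substance, the paper's actual proof, and it is not routine. After the Calder\'on--Zygmund decomposition of $\Phi(|f|)$ and the splitting of $T_{\Omega,\,b}h$ into $\sum_Q(b-\langle b\rangle_Q)T_{\Omega}h_Q$ and $T_{\Omega}\big(\sum_Q(b-\langle b\rangle_Q)h_Q\big)$ (the good part and the second term are indeed handled by the $L^2$ bound of \cite{hu1} and by Seeger's weak $(1,1)$ theorem together with (\ref{eq1.final})), the first term cannot be controlled by ``$L^1$ and $L^2$ oscillation estimates on dyadic annuli.'' The paper has to combine the size decomposition $\Omega_i$ with Seeger's microlocal decomposition in the directions $e^s_{\nu}$ (the operators $\Gamma^s_{\nu}$, $G^s_{\nu}$), prove a refined $L^2$ estimate for $\sum_j\sum_{Q}(b-\langle b\rangle_Q)T^s_{j\nu}h_Q$ via Orlicz-space interpolation (Lemma \ref{lem2.1} and Lemma \ref{lem3.2}), obtain a square-function bound for the commutators $G^s_{\nu,\,b}$ through the Coifman--Rochberg--Weiss/Hyt\"onen--P\'erez conjugation argument and Stein--Weiss interpolation with change of measure (Lemma \ref{lem4.3}), and establish the kernel estimates of Lemmas \ref{lem4.4} and \ref{lem4.5}; it is precisely here that the $L(\log L)^2$ hypothesis is consumed. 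None of this is carried out, or even correctly anticipated, in your sketch.
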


\begin{remark}For $r\in (1,\,\infty)$, let $\mathscr{M}_{r,\,T_{\Omega}}$ be the maximal operator defined by
\begin{eqnarray}\label{eq1.4}\mathscr{M}_{r,\,T_{\Omega}}f(x)=\sup_{Q\ni x}\Big(\frac{1}{|Q|}\int_Q|T_{\Omega}(f\chi_{\mathbb{R}^d\backslash 3Q})(\xi)|^rd\xi\Big)^{1/r},\end{eqnarray}
where the supremum is taken over all cubes $Q\subset \mathbb{R}^d$ containing $x$. This operator was introduced by Lerner \cite{ler4}, who proved that for any $r\in (1,\,\infty)$,
\begin{eqnarray}\label{eq1.5}\|\mathscr{M}_{r,\,T_{\Omega}}f\|_{L^{1,\,\infty}(\mathbb{R}^d)}\lesssim r\|\Omega\|_{L^{\infty}(S^{d-1})}\|f\|_{L^1(\mathbb{R}^d)},
\end{eqnarray}
see \cite[Lemma 3.3]{ler4}. The crucial estimate in the proof of Theorem \ref{thm1.1} is
\begin{eqnarray}\label{eq1.6}\|\mathscr{M}_{r,\,T_{\Omega}}f\|_{L^{1,\,\infty}(\mathbb{R}^d)}\lesssim_r \|f\|_{L^1(\mathbb{R}^d)},
\end{eqnarray}when $\Omega\in L^q(S^{d-1})$ for some $q>1$.
However, the  estimate (\ref{eq1.6}) does not hold  and the argument used in \cite{lth} does not applies when $\Omega\in L(\log L)^2(S^{d-1})$. In fact, as in the proof of Theorem 1.2 in \cite{lth}, the estimate (\ref{eq1.6}) implies the $L^p(\mathbb{R}^d,\,w)$ boundedness of $T_{\Omega}$ for large $p\in (1,\,\infty)$  and  $w\in A_{s}(\mathbb{R}^n)$ for some $s\geq 1$, which is impossible when    $\Omega\in L(\log L)^2(S^{d-1})$ (see \cite[Theorem 1]{mucken}). To prove Theorem \ref{thm1.2}, we will employ some ideas and estimates of Ding and Lai \cite{dinglai} (see also Seeger \cite{se}).  However, the estimate for $T_{\Omega,\,b}h$ is much more complicated and more refined than  the estimate of $T_{\Omega}h$ in \cite{dinglai,se}, here $h$ is the bad part in the Calder\'on-Zygmund decomposition of function $f$.   Some   computations
of Luxmberg norms, interpolation between Orlicz spaces, an observation  of Hyt\"onen and P\'erez \cite{hp} and the interpolation with changes of measures, are involved in the estimate $T_{\Omega,\,b}h$; see Lemma \ref{lem2.1}, Lemma \ref{lem3.2} and Lemma \ref{lem4.3} for details.
\end{remark}

\begin{remark}
Let $\widetilde{T}_{\Omega}$ be the operator defined by
\begin{eqnarray}\label{eq1.7}\widetilde{T}_{\Omega}f(x)={\rm p.\,v.}\int_{\mathbb{R}^d}\Omega(x-y)K(x,\,y)f(y)dy.\end{eqnarray}
Suppose that $\widetilde{T}_{\Omega}$ is bounded on $L^2(\mathbb{R}^d)$. For $b\in {\rm BMO}(\mathbb{R}^d)$, define the commutator of $\widetilde{T}_{\Omega}$ by
\begin{eqnarray}\label{eq1.8}
\widetilde{T}_{\Omega,\,b}f(x)=b(x)\widetilde{T}_{\Omega}f(x)-\widetilde{T}_{\Omega}(bf)(x)
\end{eqnarray}
initially for $f\in\mathcal{S}(\mathbb{R}^d)$. Mimicking the proof of Theorem \ref{thm1.2}, we can   prove the following  result.
\begin{theorem}\label{thm1.3}Let $\Omega$ be homogeneous of degree zero, have mean value zero on $S^{d-1}$ and $\Omega\in L(\log L)^{2}({S}^{d-1})$, $\widetilde{T}_{\Omega}$ be the operator defined by (\ref{eq1.7}) and $\widetilde{T}_{\Omega,\,b}$ be the commutator defined by (\ref{eq1.8}). Suppose that $\widetilde{T}_{\Omega}$ and $\widetilde{T}_{\Omega,\,b}$ are bounded on $L^2(\mathbb{R}^d)$,  $K$ satisfies the size condition that
$$|K(x,\,y)|\lesssim \frac{1}{|x-y|^d},
$$
and the regularity that for some $\delta\in (0,\,1]$,
$$|K(x_1,\,y)-K(x_2,\,y)|\lesssim \frac{|x_1-x_2|^{\delta}}{|x_1-y|^{d+\delta}},\,\,|x_1-y|\geq 2|x_1-x_2|,
$$
$$|K(x,\,y_1)-K(x,\,y_2)|\lesssim \frac{|y_1-y_2|^{\delta}}{|x-y_1|^{d+\delta}},\,\,|x-y_1|\geq 2|y_1-y_2|.
$$
Then for $b\in {\rm BMO}(\mathbb{R}^d)$, $k\in\mathbb{N}$ and $\lambda>0$,
$$|\{x\in\mathbb{R}^d:\, |\widetilde{T}_{\Omega,\,b}f(x)|>\lambda\}|\lesssim \int_{\mathbb{R}^d}\Phi\big(\frac{|f(x)|}{\lambda}\big) dx.$$
\end{theorem}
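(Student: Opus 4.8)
The plan is to mimic the proof of Theorem \ref{thm1.2}, after verifying that it invokes $T_\Omega$ only through three features: (i) the $L^2(\mathbb{R}^d)$-boundedness of $T_\Omega$ and of $T_{\Omega,b}$; (ii) the pointwise bound $|\Omega(y')|\,|y|^{-d}\lesssim|y|^{-d}$ for the kernel; and (iii) a family of $L^2$ almost-orthogonality estimates for dyadic truncations, in which $\Omega$ enters only via the $L(\log L)^2(S^{d-1})$ condition and the radial profile $|y|^{-d}$ enters only via its size and H\"older regularity. For $\widetilde T_\Omega$ the ingredients in (i) are hypotheses of the theorem, the size bound on $K$ in (ii) is a hypothesis, and the H\"older regularity of $K$ in each variable that is assumed is of exactly the kind needed in (iii) and is no worse than that of $|x-y|^{-d}$. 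Thus nothing structural changes, except for one point addressed at the end.

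I would first reduce to $\lambda=1$ by replacing $f$ with $f/\lambda$ (note $\widetilde T_{\Omega,b}(f/\lambda)=\lambda^{-1}\widetilde T_{\Omega,b}f$) and normalize $\|b\|_{{\rm BMO}(\mathbb{R}^d)}=1$, the implicit constant being allowed to depend on $\Omega$ and $b$. Since $\int_{\mathbb{R}^d}\Phi(|f(x)|)dx<\infty$ forces $f\in L^1(\mathbb{R}^d)$, apply the Calder\'on--Zygmund decomposition to $|f|$ at height $1$: one obtains pairwise disjoint cubes $\{Q_j\}$ with $\sum_j|Q_j|\lesssim\|f\|_{L^1(\mathbb{R}^d)}\le\int_{\mathbb{R}^d}\Phi(|f(x)|)dx$, and a splitting $f=g+h$, $h=\sum_jh_j$, with $h_j=(f-\langle f\rangle_{Q_j})\chi_{Q_j}$ of vanishing integral, $\|h_j\|_{L^1(\mathbb{R}^d)}\lesssim|Q_j|$, $\|g\|_{L^\infty(\mathbb{R}^d)}\lesssim1$ and $\|g\|_{L^1(\mathbb{R}^d)}\lesssim\|f\|_{L^1(\mathbb{R}^d)}$. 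Since $\widetilde T_{\Omega,b}$ is bounded on $L^2(\mathbb{R}^d)$ and $\|g\|_{L^2(\mathbb{R}^d)}^2\le\|g\|_{L^\infty(\mathbb{R}^d)}\|g\|_{L^1(\mathbb{R}^d)}\lesssim\|f\|_{L^1(\mathbb{R}^d)}$, Chebyshev's inequality yields
\[
\big|\{x\in\mathbb{R}^d:\,|\widetilde T_{\Omega,b}g(x)|>1/2\}\big|\lesssim\|\widetilde T_{\Omega,b}g\|_{L^2(\mathbb{R}^d)}^2\lesssim\|g\|_{L^2(\mathbb{R}^d)}^2\lesssim\int_{\mathbb{R}^d}\Phi(|f(x)|)dx,
\]
so the good part is controlled solely by the $L^2$-boundedness of the commutator.

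For the bad part, set $E=\bigcup_j2\sqrt{d}\,Q_j$, so $|E|\lesssim\sum_j|Q_j|\lesssim\int_{\mathbb{R}^d}\Phi(|f(x)|)dx$, and it remains to bound $|\{x\notin E:\,|\widetilde T_{\Omega,b}h(x)|>1/2\}|$. Using
\[
\widetilde T_{\Omega,b}h_j=(b-\langle b\rangle_{Q_j})\widetilde T_\Omega h_j-\widetilde T_\Omega\big((b-\langle b\rangle_{Q_j})h_j\big),
\]
one is reduced, exactly as in the proof of Theorem \ref{thm1.2}, to estimating $\sum_j(b-\langle b\rangle_{Q_j})\widetilde T_\Omega h_j$ and $\sum_j\widetilde T_\Omega((b-\langle b\rangle_{Q_j})h_j)$ on $\mathbb{R}^d\setminus E$; in the second sum one first writes $(b-\langle b\rangle_{Q_j})h_j=\widetilde h_j+a_j\chi_{Q_j}$ with $\widetilde h_j$ of vanishing integral and $|a_j|\lesssim\|f\|_{L\log L,Q_j}$ (generalized H\"older for the couple $(\exp L,L\log L)$ together with John--Nirenberg), so that $\sum_j|a_j|\,|Q_j|\lesssim\int_{\mathbb{R}^d}\Phi(|f(x)|)dx$ on the Calder\'on--Zygmund cubes; the piece $\sum_j\widetilde h_j$ is treated like $h$, and $\sum_j a_j\chi_{Q_j}$ is disposed of either as in the proof of Theorem \ref{thm1.2} or by a weak-type $(1,1)$ bound for $\widetilde T_\Omega$, valid here since $\Omega\in L\log L(S^{d-1})$. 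On each annulus $2^{k+1}Q_j\setminus2^kQ_j$ the factor $|b-\langle b\rangle_{Q_j}|$ contributes, after a further Luxemburg-norm splitting, an essentially linear-in-$k$ weight; one absorbs it by decomposing $\Omega=\sum_m\Omega_m$ according to size, using the $L^2$ estimates at spatial scale $2^k$ for the corresponding truncated pieces of $\widetilde T_\Omega$, and summing against the $L(\log L)^2(S^{d-1})$ hypothesis in its summable form $\sum_m m^2\|\Omega_m\|_{L^1(S^{d-1})}<\infty$, the extra power of $k$ coming from the commutator being handled through the observation of Hyt\"onen--P\'erez \cite{hp} and the interpolation with change of measure. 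This is precisely the content of Lemma \ref{lem2.1}, Lemma \ref{lem3.2} and Lemma \ref{lem4.3}; in all of it the non-convolution kernel $\Omega(x-y)K(x,y)$ enters only through the size of $K$ (bounded by $|x-y|^{-d}$, which plays the role of the radial profile $|y|^{-d}$) and the H\"older regularity of $K$ in each variable, while $\Omega$ plays its former role verbatim, so every estimate transfers with only notational changes.

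The one place that genuinely requires a new, though routine, argument is the family of $L^2$ estimates for the dyadic truncations of $\widetilde T_\Omega$, and for $\widetilde T_\Omega h_j$ rewritten (using $\int h_j=0$) as $\int_{Q_j}\big(\Omega(x-y)K(x,y)-\Omega(x-\xi_j)K(x,\xi_j)\big)h_j(y)\,dy$ with $\xi_j$ the centre of $Q_j$: in the homogeneous case one may read such bounds off from the Fourier transform of $\Omega(y')|y|^{-d}$, a tool unavailable for a non-convolution operator. Here one instead obtains them from the $L^2(\mathbb{R}^d)$-boundedness of $\widetilde T_\Omega$ together with the size and regularity of $K$, by the standard Calder\'on--Zygmund / Cotlar--Stein machinery for kernels of the form $\Omega(x-y)K(x,y)$; this is the analogue, for the present class of operators, of the $L^2$ orthogonality estimates of Ding--Lai \cite{dinglai} and Seeger \cite{se}, and it is exactly why the $L^2(\mathbb{R}^d)$-boundedness of both $\widetilde T_\Omega$ and $\widetilde T_{\Omega,b}$ is assumed in the statement. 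I expect this to be the main, and essentially the only, obstacle; once it is cleared, the proof of Theorem \ref{thm1.2} applies mutatis mutandis.
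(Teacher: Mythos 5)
Your plan coincides with the paper's own (one-line) proof of Theorem \ref{thm1.3}: mimic the proof of Theorem \ref{thm1.2}, using the assumed $L^2(\mathbb{R}^d)$ bound for $\widetilde{T}_{\Omega,\,b}$ on the good part, the weak $(1,1)$ bound for $\widetilde{T}_{\Omega}$ from \cite{dinglai} on $\widetilde{T}_{\Omega}\big(\sum_{Q}(b-\langle b\rangle_Q)h_Q\big)$, and the analogues for the kernel $\Omega(x-y)K(x,y)$ of Lemma \ref{lem2.1}, Lemma \ref{lem3.2} and Lemmas \ref{lem4.1}--\ref{lem4.5}, which is exactly what you propose. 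The only remark worth making is that the ``new obstacle'' you isolate is already in place: the $L^2$ almost-orthogonality estimates for the angularly localized truncations are obtained by direct kernel and support arguments (as in the proof of Lemma \ref{lem3.2} and in \cite{dinglai}) using only $|K(x,y)|\lesssim |x-y|^{-d}$, the thin support of $\Gamma_{\nu}^s$, and the $y$-regularity of $K$, rather than by Cotlar--Stein from the $L^2$ boundedness of $\widetilde{T}_{\Omega}$; moreover, decomposing $\Phi(|f|)$ (rather than $|f|$) at height one, as the paper does, gives $\|h_Q\|_{L\log L,\,Q}\lesssim 1$ directly and renders your auxiliary splitting $(b-\langle b\rangle_Q)h_Q=\widetilde h_Q+a_Q\chi_Q$ unnecessary.
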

As it was pointed out in \cite{dinglai}, Theorem \ref{thm1.3} is more general than Theorem \ref{thm1.2}.
\end{remark}

This paper is organized as follows. In Section 2, we outline some known facts about Orlicz spaces, and give a lemma concerning the interpolation between Orlicz spaces. In Section 3, we reduce the proof of Theorem \ref{thm1.2} to the proof of two key estimates   (\ref{equa3.u2}) and  (\ref{equa3.u3}). In Section 4 and Section 5, we prove (\ref{equa3.u2}) and  (\ref{equa3.u3}) respectively.

 Throughout this paper, $C$ always denotes a
positive constant that is independent of the main parameters
involved but whose value may differ from line to line. We use the
symbol $A\lesssim B$ to denote that there exists a positive constant
$C$ such that $A\le CB$.  Specially, we use $A\lesssim_{d,p} B$ to denote that there exists a positive constant
$C$ depending only on $d,p$ such that $A\le CB$. Constant with subscript such as $C_1$, does not change in different occurrences.
For any set $E\subset\mathbb{R}^d$,
$\chi_E$ denotes its characteristic function.  For a cube
$Q\subset\mathbb{R}^d$ and $\lambda\in(0,\,\infty)$, we use $\ell(Q)$   to denote the side length   of $Q$, and
$\lambda Q$ to denote the cube with the same center as $Q$ and whose
side length is $\lambda$ times that of $Q$. For a local function $b$ and a cube $Q$, $\langle b\rangle_Q$ denotes the mean value of $b$ on $Q$.

\section{Preliminary results on Orlicz spaces}
In this section, we list some known facts about Orlicz spaces. These facts can be found in \cite{rr}. Let $\Psi:\, [0,\,\infty)\rightarrow [0,\,\infty)$ be Young function, namely, $\Psi$ is convex and continuous on $[0,\,\infty)$, $\Psi(0)=0$ and $\lim_{t\rightarrow\infty}\Psi(t)=\infty$. We always assume  that $\Psi$ satisfies a doubling condition, that is, $\Psi(2t)\leq C\Psi(t)$ for any $t\in (0,\,\infty)$. A Young function $\Psi$ is called an $N$-function, if $\Psi(t)=0$ only in $t=0$, and $$\lim_{t\rightarrow 0}\frac{\Psi(t)}{t}=0,\,\,\,\lim_{t\rightarrow\infty}\frac{\Psi(t)}{t}=\infty.$$

Let $\Psi$ be a Young function, and $Q\subset \mathbb{R}^d$ be a cube. Define the space $L^{\Psi}(Q)$ as
$$L^{\Psi}(Q)=\{f:\,f\,\,\hbox{is\,\, measurable\,\,on}\,\,Q,\,\|f\|_{L^{\Psi}(Q)}<\infty\},
$$
with $\|\cdot\|_{L^{\Psi}(Q)}$ the Luxemburg norm defined by $$\|f\|_{L^{\Psi}(Q)}=\inf\Big\{\lambda>0:\,\frac{1}{|Q|}\int_{Q}\Psi\Big(\frac{|f(x)|}{\lambda}\Big)dx\leq 1\Big\}.$$
Then we have
$$\frac{1}{|Q|}\int_{Q}\Psi(|f(x)|)dx\leq 1\Leftrightarrow\|f\|_{L^{\Psi}(Q)}\leq 1,
$$
see \cite[p. 54]{rr}. Also, we have that and
$$\|f\|_{L^{\Psi}(Q)}\leq \inf\Big\{\lambda+\frac{\lambda}{|Q|}\int_{Q}\Psi\Big(\frac{|f(x)|}{\lambda}\Big)dx:\,\lambda>0\Big\}\leq 2\|f\|_{L^{\Psi}(Q)};$$
see \cite[p. 69]{rr}.

Let $\Psi$ be a Young function. We define its complementary function $\Psi^*$ on $[0,\,\infty)$ by
$$\Psi^*(t)=\sup\{st-\Psi(s): \,s\geq 0\}.
$$
Then $\Psi^*$ is also a Young function. We have that \begin{eqnarray}\label{eq2.holder}t_1t_2\leq \Psi(t_1)+\Psi^*(t_2),\,\,t_1,\,t_2\in [0,\,\infty),\end{eqnarray}
and consequently, the generalized H\"older inequality
$$\frac{1}{|Q|}\int_Q|f(x)h(x)|dx\leq \|f\|_{L^{\Psi}(Q)}\|h\|_{L^{\Psi^*}(Q)}
$$holds for $f\in L^{\Psi}(Q)$ and $h\in L^{\Psi^*}(Q)$.
see \cite[p. 6]{rr}. Also, we have
\begin{eqnarray}\label{eq2.twonorms}C\|f\|_{L^{\Psi}(Q)}\leq \sup_{\|h\|_{L^{\Psi^*}(Q)}\leq 1}\frac{1}{|Q|}\Big|\int_{Q}f(x)h(x)dx\Big|\leq \|f\|_{L^{\Psi}(Q)},
\end{eqnarray}
see  inequality (18) in \cite[p.\,62]{rr}.
When the  functions $\Psi$ and $\Psi^*$ are $N$-functions,  the inequality
$$t\leq \Psi^{-1}(t) (\Psi^*)^{-1}(t)\leq 2t,$$
holds true for all $t>0$, where $\Psi^{-1}(t)$ is the inverse  of $\Psi(t)$ (see \cite[p.13]{rr} for details).

Now let $p\in [1,\,\infty)$ and $\alpha\in \mathbb{R}$, set $\Phi_{p,\,\alpha}(t)=t^p\log^{\alpha}({\rm e}+t)$. Note that $\Phi_{p,\,p}(t)=(\Phi(t))^p.$ As it is well known, for $p\in (1,\,\infty)$ and $\alpha\in [0,\,\infty)$,
the complementary function of $\Phi_{p,\,\alpha}$ is
$$\Phi_{p,\,\alpha}^*(t)\approx t^{p'}\log^{-\alpha/(p-1)}({\rm e}+t),$$
see \cite{oneil}. Usually, we denote $\|f\|_{L^{\Phi_{p,\,\alpha}}(Q)}$ as $\|f\|_{L^p(\log L)^{\alpha},\,Q}$. Observe that when $p\in (1,\,\infty)$, $\Phi_{p,\,\alpha}(t)$  satisfies the doubling condition.

As it is well known, for $\Phi(t)=t\log ({\rm e}+t)$, we have that $\Phi^*(t)\approx {\rm e}^t-1$.
For a cube $Q\subset \mathbb{R}^d$, we also define $\|f\|_{{\rm exp}L,\,Q}$ by
$$\|f\|_{{\rm exp}L,\,Q}=\inf\Big\{t>0:\,\frac{1}{|Q|}\int_{Q}\Phi^*\Big(\frac{|f(y)|}{t}\Big)dy\leq 1\Big\}.$$
Let $b\in{\rm BMO}(\mathbb{R}^d)$. The John-Nirenberg inequality tells us that for any $Q\subset \mathbb{R}^d$,
$$\|b-\langle b\rangle_Q\|_{{\rm exp}L,\,Q}\lesssim \|b\|_{{\rm BMO}(\mathbb{R}^d)}.$$
This, together with  the generalization of H\"older's inequality, shows that
\begin{eqnarray}\label{eq1.final}
\frac{1}{|Q|}\int_{Q}|b(x)-\langle b\rangle_Q||h(x)|dx\lesssim \|h\|_{L\log L,\,Q}\|b\|_{{\rm BMO}(\mathbb{R}^d)}.
\end{eqnarray}

The following lemma will be used in the proof of Theorem \ref{thm1.2}.

\begin{lemma}\label{lem2.1}
Let $Q\subset \mathbb{R}^d$ be a cube,  $p\in (1,\,\infty)$, $\alpha\in [0,\,\infty)$ and $C_1\in (0,\,1]$. Suppose that
$$\frac{1}{|Q|}\int_{Q}|f(y)|dy\leq C_1,\,\,\|f\|_{L^p(\log L)^{-\alpha},\,Q}\leq 1.$$
Then for $q\in (1,\,p)$, $r\in (0,\,1)$ such that $1/q=r+(1-r)/p$, and $\varepsilon\in (0,\,r)$,
$$\Big(\frac{1}{|Q|}\int_{Q}|f(y)|^qdy\Big)^{\frac{1}{q}}\lesssim C_1^{\varepsilon}.
$$
\end{lemma}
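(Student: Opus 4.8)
The plan is to interpolate three pieces of information: the $L^1$-average bound $\frac{1}{|Q|}\int_Q|f|\le C_1$, the normalization $\|f\|_{L^p(\log L)^{-\alpha},\,Q}\le 1$, and a reverse H\"older / interpolation argument in the scale of $L^q$-averages. The target exponent $q$ is, by hypothesis, the one for which $1/q = r + (1-r)/p$ with $r\in(0,1)$, i.e. $q$ sits between $1$ and $p$ along the interpolation scale with parameter $r$ attached to the endpoint $L^1$. First I would record the trivial but useful consequence of $\|f\|_{L^p(\log L)^{-\alpha},\,Q}\le 1$: since $\Phi_{p,-\alpha}(t)=t^p\log^{-\alpha}(\mathrm e+t)\le t^p$ for $\alpha\ge 0$ and $t\ge 0$ is \emph{false} in general, I instead use that $\|f\|_{L^p(\log L)^{-\alpha},\,Q}\le 1$ implies $\frac{1}{|Q|}\int_Q|f|^p\log^{-\alpha}(\mathrm e+|f|)\,dy\le 1$, and hence, for any fixed large $A$, a bound on the portion of the integral where $|f|$ is not too large; the log loss only helps us when $|f|$ is big. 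The cleaner route is to treat $\|f\|_{L^p(\log L)^{-\alpha},\,Q}\le 1$ as giving $\|f\|_{L^{q_0}(Q,\,dy/|Q|)}\lesssim 1$ for every $q_0<p$ (since $L^p(\log L)^{-\alpha}(Q)\hookrightarrow L^{q_0}(Q)$ with the average normalization, because $t^{q_0}\lesssim_{q_0,p,\alpha} t^p\log^{-\alpha}(\mathrm e+t)+1$ for all $t\ge 0$). In particular $\|f\|_{L^{q_0},\,Q\text{-avg}}\lesssim 1$ for a $q_0$ chosen with $q<q_0<p$.

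Next, the core step is a one-line interpolation of $L^q$ between $L^1$ and $L^{q_0}$. Write $\theta\in(0,1)$ for the interpolation parameter determined by $1/q=\theta/1+(1-\theta)/q_0$. Then
\begin{equation*}
\Big(\frac{1}{|Q|}\int_Q|f|^q\,dy\Big)^{1/q}\le\Big(\frac{1}{|Q|}\int_Q|f|\,dy\Big)^{\theta}\Big(\frac{1}{|Q|}\int_Q|f|^{q_0}\,dy\Big)^{(1-\theta)/q_0}\lesssim C_1^{\theta}\cdot 1,
\end{equation*}
by H\"older's inequality with exponents $1/\theta$ and $1/(1-\theta)$ applied to $|f|^q=|f|^{q\theta}\cdot|f|^{q(1-\theta)}$. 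So the whole matter reduces to checking that the exponent $\theta$ produced here is $\ge\varepsilon$, i.e. that we may take $q_0$ close enough to $p$ that $\theta(q,q_0)>\varepsilon$. Since as $q_0\uparrow p$ the relation $1/q=\theta+(1-\theta)/q_0$ forces $\theta\to r$ (this is exactly the hypothesis $1/q=r+(1-r)/p$), and since $\varepsilon<r$, continuity of $\theta$ in $q_0$ lets me fix $q_0<p$ with $\theta(q,q_0)\ge\varepsilon$. Then $C_1^{\theta}\le C_1^{\varepsilon}$ because $C_1\le 1$, which gives the claimed bound $\big(\frac{1}{|Q|}\int_Q|f|^q\big)^{1/q}\lesssim C_1^{\varepsilon}$ with implied constant depending only on $d,p,q,\alpha,\varepsilon$ (through the choice of $q_0$ and the embedding constant).

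I expect the only genuinely delicate point to be the embedding $\|f\|_{L^{q_0},\,Q\text{-avg}}\lesssim 1$ from $\|f\|_{L^p(\log L)^{-\alpha},\,Q}\le 1$ with a constant \emph{not} depending on $C_1$ — one must verify that $t^{q_0}\le c(q_0,p,\alpha)\,\big(t^p\log^{-\alpha}(\mathrm e+t)\big)+c(q_0,p,\alpha)$ for all $t\ge 0$, which is immediate: for $t\le 1$ the right side dominates via the additive constant, and for $t\ge 1$ one uses $q_0<p$ together with $\log^{-\alpha}(\mathrm e+t)\ge (\mathrm e+t)^{-(p-q_0)/2}$ eventually, then absorbs the finitely many remaining $t$ into the constant; equivalently invoke that $\Phi_{p,-\alpha}$ and $t^{q_0}$ are $N$-functions with $\Phi_{p,-\alpha}$ growing faster, so $L^{\Phi_{p,-\alpha}}(Q)\subset L^{q_0}(Q)$ with norm control on a finite-measure space. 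Everything else is H\"older's inequality and the elementary continuity argument for $\theta$, so no further obstacles are anticipated.
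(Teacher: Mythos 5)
Your proposal is correct and follows essentially the same route as the paper: the normalized embedding $L^{\Phi_{p,-\alpha}}(Q)\hookrightarrow L^{q_1}(Q)$ for $q_1<p$ (the paper's inequality (2.4), proved by the same pointwise comparison $t^{q_1}\lesssim \Phi_{p,-\alpha}(t)+1$) combined with H\"older interpolation of the $L^q$ average between the $L^1$ average and the $L^{q_1}$ average; the only difference is bookkeeping — the paper picks $q_1$ so the interpolation exponent equals $\varepsilon$ exactly (which is where $\varepsilon<r$ is used), while you take $q_0$ close to $p$ so that $\theta\ge\varepsilon$ and then invoke $C_1\le 1$. One cosmetic slip: the conjugate exponents in your H\"older step should be $1/(q\theta)$ and $q_0/(q(1-\theta))$ rather than $1/\theta$ and $1/(1-\theta)$, but the displayed interpolation inequality itself is the standard one and correct.
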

\begin{proof} At first, we claim that for  $q_1\in [1,\,p)$,
\begin{eqnarray}\label{equation2.4}
\Big(\frac{1}{|Q|}\int_{Q}|h(y)|^{q_1}dy\Big)^{\frac{1}{q_1}}\lesssim \|h\|_{L^p(\log L)^{-\alpha},\,Q}.
\end{eqnarray}
To prove this, we assume that $\|h\|_{L^p(\log L)^{-\alpha},\,Q}=1$, which means that
$$\frac{1}{|Q|}\int_{Q}\Phi_{p,\,-\alpha}(|h(x)|)dx\leq 1.
$$
Observe that when $t\in [1,\,\infty)$, $t^{q_1-p}\lesssim \log^{-\alpha}({\rm e}+t)$. Therefore,
\begin{eqnarray*}
\frac{1}{|Q|}\int_{Q}|h(y)|^{q_1}dy&\leq& 1+\frac{1}{|Q|}\int_{\{y\in Q:\,|h(y)|\geq 1\}}|h(y)|^{q_1}dy\\
&\lesssim &1+\frac{1}{|Q|}
\int_{\{y\in Q:\,|h(y)|\geq 1\}}\Phi_{p,-\alpha}(|h(y)|)dy\lesssim 1.
\end{eqnarray*}
This verifies (\ref{equation2.4}).
For fixed  $q\in (1,\,\infty)$ and $\varepsilon\in (0,\,r)$, we choose $q_1\in (1,\,p)$ such that  $1/q=\varepsilon+(1-\varepsilon)/q_1$. It then follows from (\ref{equation2.4}) that
$$\Big(\int_{Q}|f(y)|^qdy\Big)^{\frac{1}{q}}\le  \Big(\int_{Q}|f(y)|dy\Big)^{\varepsilon}\Big(\int_{Q}|f(y)|^{q_1}dy\Big)^{\frac{1-\varepsilon}{q_1}}
\lesssim C_1^{\varepsilon}|Q|^{1/q},$$
and then completes the proof of Lemma \ref{lem2.1}.
\end{proof}

\section{Proof of Theorem \ref{thm1.2}}
In this section, we will  start to prove Theorem \ref{thm1.2}. In particular, we reduces its proof to two estimates (\ref{equa3.u2}) and  (\ref{equa3.u3}), which will be proved in Section 4 and Section 5 respectively.

To prove Theorem \ref{thm1.2}, we will employ the well known micro-local decomposition introduced by Seeger \cite{se}, see \cite[Section 2]{dinglai} for its variant. For $s>3$, let  $\mathfrak{E}^s=\{e_{\nu}^s\}_{\nu\in\Lambda_s}$ be a collection of unit vectors on $S^{d-1}$ such that
\begin{itemize}
\item[\rm (a)] $|e_{\nu}^s-e_{\nu'}^s|>2^{-s\gamma-4}$ when $\nu\not=\nu'$;
\item[\rm (b)] for each $\theta\in S^{d-1}$, there exists an $e^s_{\nu}$ such that $|e^s_{\nu}-\theta|\leq 2^{-s\gamma-4},$
\end{itemize}
where $\gamma\in (0,\,1)$ is a constant.  The set $\mathfrak{E}^s$ can be constructed as in \cite[Section 2]{dinglai}.  Observe that ${\rm card}(\mathfrak{E}^s)\lesssim 2^{s\gamma(d-1)}$.
Let $\zeta$  be a smooth, nonnegative, radial function, such that ${\rm supp}\, \zeta\subset B(0,\,1)$ and $\zeta(t)=1$ for $|t|\leq 1/2$. Set
$$\widetilde{\Gamma}_\nu^s(\xi)=\zeta\Big(2^{s\gamma}\big(\frac{\xi}{|\xi|}-e^s_{\nu}\big)\Big)$$
and$$\Gamma_{\nu}^s(\xi)=\widetilde{\Gamma}^s_{\nu}(\xi)\Big(\sum_{\nu\in\Lambda_s}\widetilde{\Gamma}^s_{\nu}(\xi)\Big)^{-1}.$$ It is easy to verify that $\Gamma^s_{\nu}$ is homogeneous of degree zero, and for all $s$,
$$\sum_{\nu\in\Lambda_s}\Gamma^s_{\nu}(\xi)=1,\,\, \xi\in S^{d-1}.
 $$

Let $\psi\in C^{\infty}_0(\mathbb{R})$ such that $0\leq \psi\leq 1$,  ${\rm supp}\, \psi\subset [-4,\,4]$ and $\psi(t)\equiv 1$ when $t\in [-2,\,2]$. Define the multiplier operator $G_{\nu}^s$ by
$$\widehat{G_{\nu}^sf}(\xi)=\psi\big(2^{s\gamma}\langle \xi/|\xi|, e_{\nu}^s\rangle\big)\widehat{f}(\xi),
$$
where and in the following, for a suitable function $f$, $\hat{f}$ denotes the Fourier transform of $f$.
Take a smooth radial nonnegative function $\phi$ on $\mathbb{R}^d$ such that ${\rm supp}\, \phi\subset\{x:\frac{1}{2}\leq |x|\leq 2\}$ and
$\sum_j\phi_j(x)=1$ for all $x\in \mathbb{R}^d\backslash\{0\}$, where $\phi_j(x)=\phi(2^{-j}x)$.

Recall that, $\mathcal{D}$,  the standard dyadic grid in $\mathbb{R}^d$ consists of all cubes of the form $$2^{-k}([0,\,1)^d+l),\,k\in  \mathbb{Z},\,\,l\in\mathbb{Z}^d.$$
For $j\in\mathbb{Z}$, let $\mathcal{D}_j=\{Q\in \mathcal{D}:\, \ell(Q)=2^j\}$.

\medskip

{\it Proof of Theorem \ref{thm1.2}}. By homogeneity, it suffices to prove  (\ref{eq1.weak}) for the case of $\lambda=1$.
Applying the Calder\'on-Zygmund decomposition to $\Phi(|f|)$ at level $1$, we can obtain a collection
of non-overlapping closed dyadic cubes $\mathcal{S}=\{Q\}$, such that $\|f\|_{L^{\infty}(\mathbb{R}^d\backslash \cup_{Q\in\mathcal{S}}Q)}\lesssim 1$,  and
$$\int_{Q}\Phi(|f(x)|)dx \lesssim |Q|,\,\,\,\sum_{Q\in\mathcal{S}}|Q|\lesssim \int_{\mathbb{R}^d}\Phi(|f(x)|)dx.$$
Let  $E=\cup_{Q\in\mathcal{S}}2^{200}Q$, it is obvious that $|E|\lesssim \int_{\mathbb{R}^d}\Phi(|f(x)|)dx$. Set$$g(x)=f(x)\chi_{\mathbb{R}^d\backslash \cup_{Q\in\mathcal{S}}Q}(x)+\sum_{Q\in\mathcal{S}}\langle f\rangle_{Q}\chi_{Q}(x),
$$
and
$$h(x)=\sum_{Q\in\mathcal{S}}h_{Q}(x),\,\,\hbox{with}\,\,h_Q(x)=(f(x)-\langle f\rangle_{Q}\big)\chi_{Q}(x).$$
It is easy to verify that  for each cube $Q\in\mathcal{S}$,
$$\|h_{Q}\|_{L\log L,\,Q}\lesssim 1.
$$
By $L^2(\mathbb{R}^d)$ boundedness of $T_{\Omega,\,b}$,   we have that
\begin{eqnarray}\label{equation3.13}\big|\{x\in\mathbb{R}^d:\, |T_{\Omega,\,b}g(x)|>1/2\}\big|\lesssim \int_{\mathbb{R}^d}|f(x)| dx.\end{eqnarray}

Let
$$E_0=\{x'\in S^{d-1}:\, |\Omega(x')|\leq 1\}$$ and $$E_i=\{x'\in S^{d-1}:\,2^{i-1}<|\Omega(x')|\leq 2^i\}\,\,(i\in\mathbb{N}).$$
Denote
$$\Omega_0(x')=\Omega(x')\chi_{E_0}(x'),\,\,\Omega_i(x')=\Omega(x')\chi_{E_i}(x')\,\,(i\in\mathbb{N}).$$ Set $K_j(x)=\frac{\Omega(x')}{|x|^d}\phi_j(x)$, $K_j^i(x)=\frac{\Omega_i(x')}{|x|^d}\phi_j(x)$, $K_{j\nu}^{i,\,s}(x)=\frac{\Omega_i(x')}{|x|^d}\phi_j(x)\Gamma_{\nu}^s(x')$,  $T_j$ be the convolution operators with kernel $K_j$, and
$$T_j^iu(x)=K_j^i*u(x),\,\,\,T_{j\nu}^{i,s}u(x)=K_{j\nu}^{i,s}*u(x).$$
Observe that for each fixed $s$,
$T_{j}^{i}u(x)=\sum_{\nu}T_{j\nu}^{i,\,s}u(x)$.
It is obvious that  ${\rm supp}T_jh_Q\subset 2^{100}Q$ when $Q\in\mathcal{S}_{j-s}$ with $j\in\mathbb{Z}$ and $s<100$. Set $\mathcal{S}_j=\mathcal{D}_j\cap \mathcal{S}$.
For $x\in \mathbb{R}^d\backslash E$, we can decompose $T_{\Omega,\,b}h$ as
\begin{eqnarray*}T_{\Omega,\,b}h(x)&=&\sum_{Q}(b-\langle b\rangle_Q)T_{\Omega}h_Q(x)-T_{\Omega}\Big(\sum_{Q\in\mathcal{S}}(b-\langle b\rangle_Q)h_Q\Big)(x)\\
&=&\sum_{s\geq 100}\sum_{j\in\mathbb{Z}}\sum_{Q\in \mathcal{S}_{j-s}}(b(x)-\langle b\rangle_Q)T_jh_Q(x)-T_{\Omega}\Big(\sum_{Q\in\mathcal{S}}(b-\langle b\rangle_Q)h_Q\Big)(x).\end{eqnarray*}
Recall that $T_{\Omega}$ is bounded from $L^1(\mathbb{R}^d)$ to $L^{1,\,\infty}(\mathbb{R}^d)$. An application of (\ref{eq1.final}) tells us that
\begin{eqnarray}\label{equation3.14}
&&\Big|\Big\{x\in\mathbb{R}^d:\Big|T_{\Omega}\Big(\sum_{Q\in\mathcal{S}}(b-\langle b\rangle_Q)h_Q\Big)(x)\Big|>\frac{1}{4}\Big\}\Big|\\
&&\quad\lesssim \sum_{Q\in\mathcal{S}}\|(b-\langle b\rangle_Q)h_Q\|_{L^1(\mathbb{R}^d)}\lesssim\sum_{Q\in\mathcal{S}}|Q|
\|h_{Q}\|_{L\log L,Q}\nonumber\\
&&\quad\lesssim \int_{\mathbb{R}^d}\Phi(|f(x)|)dx.\nonumber
\end{eqnarray}
With estimates (\ref{equation3.13}) and (\ref{equation3.14}) in hand, it suffices to prove that
\begin{eqnarray}\label{equation3.15}
&&\Big|\Big\{x\in\mathbb{R}^d\backslash E:\,\Big|\sum_{s\geq 100}\sum_{j\in\mathbb{Z}}\sum_{Q\in \mathcal{S}_{j-s}}(b(x)-\langle b\rangle_Q)T_jh_Q(x)\Big|>\frac{1}{4}\Big\}\Big|\lesssim \|f\|_{L^1(\mathbb{R}^d)}.
\end{eqnarray}

To prove (\ref{equation3.15}), let
$${\rm U}_1h(x)=\sum_{i=0}^{\infty}\sum_{100\leq s\leq N_0i}\sum_{j\in\mathbb{Z}}\sum_{Q\in\mathcal{S}_{j-s}}\big(b(x)-\langle b\rangle_Q\big)T^i_{j}h_Q(x),
$$
$${\rm U}_2h(x)=\sum_{i=0}^{\infty}\sum_{s> N_0i}\sum_{j\in\mathbb{Z}}\sum_{Q\in\mathcal{S}_{j-s}}\sum_{\nu}G_{\nu}^s\big[\big(b-\langle b\rangle_Q\big)T^{i,s}_{j\nu}h_Q\big](x),
$$
and
\begin{eqnarray*}
{\rm U}_3h(x)&=&\sum_{i=0}^{\infty}\sum_{s> N_0i}\sum_{j\in\mathbb{Z}}\sum_{Q\in\mathcal{S}_{j-s}}\Big[\big(b(x)-\langle b\rangle_Q\big)T^i_{j}h_Q(x)\\
&&\qquad-\sum_{\nu}G_{\nu}^s\big[\big(b-\langle b\rangle_Q\big)T^{i,s}_{j\nu}h_Q\big](x)\Big],
\end{eqnarray*}
where and in the following, $N_0\in\mathbb{N}$ is a constant which will be chosen in the estimate for ${\rm U}_2$ and ${\rm U}_3$, see (\ref{equa3.const}) in Section 5.
For $x\in \mathbb{R}^d\backslash E$,
we write
\begin{eqnarray*}\sum_{s\geq 100}\sum_{j\in\mathbb{Z}}\sum_{Q\in \mathcal{S}_{j-s}}(b(x)-\langle b\rangle_Q)T_jh_Q(x)={\rm U}_1h(x)+{\rm U}_2h(x)+{\rm U}_3h(x).
\end{eqnarray*}

To estimate term ${\rm U}_1$, we claim that for each cube $Q\in \mathcal{S}_{j-s}$,
\begin{eqnarray}\label{equa3.claim}&&\|(b-\langle b\rangle_Q)T_{j}^ih_Q\|_{L^1(\mathbb{R}^d)}\lesssim \big(2^{-i}+(i+s)\|\Omega_i\|_{L^1(S^{d-1})}\big)\|h_Q\|_{L^1(\mathbb{R}^d)},
\end{eqnarray}
To see this, let   $x_Q$ be the center of $Q$. It is easy to  see that ${\rm supp}\,T_jh_Q\subset B_{Q}:=B(x_Q,\, 10d2^{j})$, and $|\langle b\rangle_Q-\langle b\rangle_{B_Q}|\lesssim s.$ Observing that for each $y\in Q$ and $\lambda>0$,
$$\int_{B_Q}
\frac{|\Omega_i(x-y)|}{\lambda}\log \Big({\rm e}+\frac{|\Omega_i(x-y)|}{\lambda}\Big)dx\lesssim 2^{jd}\int_{S^{d-1}}\frac{|\Omega_i(\theta)|}{\lambda}\log \Big({\rm e}+\frac{|\Omega_i(\theta)|}{\lambda}\Big)d\theta,
$$
we thus get that for $y\in Q$,
\begin{eqnarray*}\|\Omega_i(\cdot-y)\|_{L\log L,\,B_Q}&\lesssim & \inf\Big\{\lambda>0:\,\frac{\|\Omega_i\|_{L^1(S^{d-1})}}{\lambda}\log\Big({\rm e}+\frac{\|\Omega_i\|_{L^{\infty}(S^{d-1})}}{\lambda}\Big)\leq 1\Big\}\\
&\lesssim&\|\Omega_i\|_{L^{\infty}(S^{d-1})}^{-1}+\|\Omega_i\|_{L^1(S^{d-1})}\log ({\rm e}+\|\Omega_i\|_{L^{\infty}(S^{d-1})})\\
&\lesssim&2^{-i}+i\|\Omega_i\|_{L^1(S^{d-1})}.
\end{eqnarray*}
It then follows from   inequality (\ref{eq1.final}) that for each $y\in Q$,
\begin{eqnarray*}
\int_{B_Q}|K_j^i(x-y)||b(x)-\langle b\rangle_Q|dx&\le&2^{-jd}\int_{B_Q}|\Omega_i(x-y)||b(x)-\langle b\rangle_{B_Q}|dx\\
&&+2^{-jd}\int_{B_Q}|\Omega_i(x-y)|dx|\langle b\rangle_Q-\langle b\rangle_{B_Q}|\\
&\lesssim&2^{-i}+(i+s)\|\Omega_i\|_{L^1(S^{d-1})}.
\end{eqnarray*}
This, via duality argument, verifies (\ref{equa3.claim}). Now we obtain from (\ref{equa3.claim}) that
\begin{eqnarray*}
\|{\rm U}_1h\|_{L^1(\mathbb{R}^d)}&\le &\sum_{i=0}^{\infty}\sum_{100\leq s\leq N_0i}\sum_{j\in\mathbb{Z}}\sum_{Q\in \mathcal{S}_{j-s}}\|(b-\langle b\rangle_Q)T_{j}^ih_Q\|_{L^1(\mathbb{R}^d)}\\
&\lesssim&\sum_{i=0}^{\infty}\sum_{100\leq s\leq N_0i}\big(2^{-i}+(i+s)\|\Omega_i\|_{L^1(S^{d-1})}\big)\int_{\mathbb{R}^d}|f(x)|dx\\
&\lesssim&\big(1+\|\Omega\|_{L(\log L)^2(S^{d-1})}\big)\int_{\mathbb{R}^d}|f(x)|dx.
\end{eqnarray*}  
Therefore,
$$|\{x\in\mathbb{R}^d\backslash E:\, |{\rm U}_1h(x)|>\frac{1}{12}\}|\lesssim \int_{\mathbb{R}^d}|f(x)|dx.$$

The proof of (\ref{equation3.15}) is now reduced to proving that
\begin{eqnarray}\label{equa3.u2}\big|\big\{x\in\mathbb{R}^d\backslash E:\, |{\rm U}_2h(x)|>\frac{1}{12}\big\}\big|\lesssim \int_{\mathbb{R}^d}|f(x)|dx,
\end{eqnarray}
and
\begin{eqnarray}\label{equa3.u3}\big|\big\{x\in\mathbb{R}^d\backslash E:\, |{\rm U}_3h(x)|>\frac{1}{12}\big\}\big|\lesssim \int_{\mathbb{R}^d}|f(x)|dx.
\end{eqnarray}
The proofs of these two inequalities are long and complicated, and will be given in Section 4 and Section 5 respectively.\qed
\section{proof of inequality (\ref{equa3.u2})}

Let $\Omega$ be homogeneous of degree zero and $\Omega\in L^{\infty}(S^{d-1})$. For each $j\in\mathbb{Z}$ and $\nu\in \Lambda_s$, define operator $T_{j\nu}^s$  by
\begin{eqnarray}\label{equation3.1}
T_{j\nu}^sf(x)=K_{j\nu}^s*f(x),\end{eqnarray}where  $K_{j\nu}^s(x)=\Omega(x')|x|^{-d}\phi_j(x)\Gamma_{\nu}^s(x')$.
Let $\mathcal{S}$ be a collection of dyadic  cubes with disjoint interiors. For $m\in\mathbb{Z}$, let $\mathcal{S}_m=\mathcal{S}\cap \mathcal{D}_{m}$. Then for each $\nu$ and $s\geq 3$,
\begin{eqnarray}\label{equation3.2}&&\Big\|\sum_j\sum_{Q\in\mathfrak{Q}_{j-s}}T_{j\nu}^sh_Q\Big\|^2_{L^2(\mathbb{R}^d)}\lesssim 2^{-2\gamma s(d-1)}\|\Omega\|^2_{L^{\infty}(S^{d-1})}\sum_{j}\sum_{Q\in \mathfrak{Q}_{j-s}}\|h_Q\|_{L^1(\mathbb{R}^d)},
\end{eqnarray}
where $\mathfrak{Q}_{j-s}\subset \mathcal{S}_{j-s}$,  each $h_Q$ is supported on cube $Q\in\mathfrak{Q}_{j-s}$ and  $\|h_Q\|_{L^1(\mathbb{R}^d)}\le |Q|.$ This fact was proved in \cite[p.1658]{dinglai} (also \cite[p.\,99]{se}) and plays an important role in the weak type endpoint estimate for $T_{\Omega}$.

To prove  inequality (\ref{equa3.u2}), we need the following key lemma
which can be considered as a  refined version of the estimate (\ref{equation3.2}).
\begin{lemma}\label{lem3.2}
Let $\Omega$ be homogeneous of degree zero and $\Omega\in L^{\infty}(S^{d-1})$, $\mathcal{S}$ be a collection of dyadic   cubes with disjoint interiors. For
each cube $Q\in\mathcal{S}$, let $h_Q$ be an integrable function supported in $Q$ satisfying $\|h_Q\|_{L^1(\mathbb{R}^d)}\leq |Q|$.
Then  for $b\in{\rm BMO}(\mathbb{R}^d)$ and $s\geq 100$,
$$\Big\|\sum_{j}\sum_{Q\in\mathcal{S}_{j-s}}\sum_{\nu}G_{\nu}^s\big((b-\langle b\rangle_Q)T_{j\nu}^sh_Q\big)\Big\|^2_{L^2(\mathbb{R}^d)}\lesssim \|\Omega\|_{L^{\infty}(S^{d-1})}^22^{-s\gamma/2}\sum_{Q\in\mathcal{S}}\|h_{Q}\|_{L^1(\mathbb{R}^d)}.$$
\end{lemma}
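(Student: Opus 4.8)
The plan is to reduce the estimate to the unweighted $L^2$ bound \eqref{equation3.2} by exploiting the commutator structure, i.e.\ by "absorbing" the factor $b-\langle b\rangle_Q$ at the cost of a controllable loss in the exponential decay in $s$. Write $L_s h=\sum_j\sum_{Q\in\mathcal S_{j-s}}\sum_\nu G_\nu^s\big((b-\langle b\rangle_Q)T_{j\nu}^sh_Q\big)$. First I would fix a large exponent $p=p(s)$ to be chosen later, $p\sim s\gamma$, and dualize: pick $g\in L^2$ with $\|g\|_2=1$ and estimate $\langle L_sh,g\rangle$. For each $Q\in\mathcal S_{j-s}$ the function $T_{j\nu}^sh_Q$ is supported near $Q$ on a ball $B_Q$ of radius comparable to $2^{j}$, and $G_\nu^s$ has a kernel with good decay at scale $2^{j}$, so the supports of the pieces coming from $Q$ are essentially finitely overlapping at each scale $j$. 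On $B_Q$ one has $\|b-\langle b\rangle_Q\|_{\exp L,\,B_Q}\lesssim\|b\|_{\mathrm{BMO}}$, hence by the generalized Hölder inequality $|b-\langle b\rangle_Q|$ can be split into a "small" part (where $|b-\langle b\rangle_Q|\le A$, $A\sim\log p$) plus a tail whose $L^p(B_Q)$ norm is $\lesssim p\,\|b\|_{\mathrm{BMO}}$.

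The main mechanism I expect to use is interpolation between $L^2$ and a large $L^p$. The estimate \eqref{equation3.2}, applied to the truncated kernels, gives the $L^2\to L^2$ bound with gain $2^{-\gamma s(d-1)}$; on the other hand a crude bound (Minkowski's inequality in $\ell^1$ over the cubes, using $|K_{j\nu}^s|\lesssim 2^{-s\gamma(d-1)}2^{-jd}\chi_{B_Q}$ and $\|h_Q\|_1\le|Q|$) gives $L^1\to L^1$, hence $L^p\to L^p$, bounds that are only polynomial in $s$. Here is where Lemma~\ref{lem2.1} enters: after the Calderón–Zygmund-type normalization $\|h_Q\|_{L^1}\le|Q|$ combined with the $L\log L$ control, one upgrades the local $L^1$ information on $h_Q$ to an $L^{q}$ estimate with a tiny power loss $|Q|^{1/q}$ times a constant, which is exactly what is needed to feed $T_{j\nu}^s h_Q$ into the higher-integrability side. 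Interpolating the $L^2$-gain against the polynomial $L^p$-bound with the right $\theta$, and then summing the geometric-type series in $i$ (the level sets of $b-\langle b\rangle_Q$) and over scales, produces a net gain of the form $2^{-s\gamma/2}$ provided $p$ is chosen $\asymp s\gamma$ and $s\ge 100$. The commutator losses — the factor $|\langle b\rangle_Q-\langle b\rangle_{B_Q}|\lesssim s$ and the exponential-integrability tail producing a factor polynomial in $s$ — are all absorbed because we traded half of the exponent $2\gamma s(d-1)$ for them.

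More concretely, the steps I would carry out are: (1) reduce to a single $s$ and to the dual pairing with $g\in L^2$; (2) localize, noting the bounded overlap of the balls $B_Q$ at each scale and the almost-orthogonality across scales $j$ coming from the Fourier support of $G_\nu^s$ combined with $\phi_j$; (3) split $b-\langle b\rangle_{B_Q}=\big(b-\langle b\rangle_{B_Q}\big)\chi_{\{|b-\langle b\rangle_{B_Q}|\le A\}}+\sum_{m>A}\big(b-\langle b\rangle_{B_Q}\big)\chi_{\{2^{m-1}<|b-\langle b\rangle_{B_Q}|\le 2^m\}}$, handle the first piece by the boundedness $\|b\|_\infty$-type bound combined directly with \eqref{equation3.2} (losing only a factor $A$), and for each dyadic tail piece use John–Nirenberg to bound the measure of its support inside $B_Q$ by $e^{-c2^m/\|b\|_{\mathrm{BMO}}}|B_Q|$; (4) on each tail piece apply Hölder in $(p,p')$ together with the elementary bound \eqref{equation3.2} for the $T_{j\nu}^s$ part and the $L^p$-improved bound for $h_Q$ obtained from Lemma~\ref{lem2.1}, and sum in $m$; (5) optimize $p\asymp s\gamma$ and verify the exponents add up to at least $2^{-s\gamma/2}$.

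The hardest part, I expect, is step (3)–(4): making the interpolation quantitative enough that the polynomial-in-$s$ losses coming from the commutator (the mean-oscillation term $\lesssim s$, and the $\log$-type constants from the Luxemburg/$\exp L$ computations) are genuinely dominated by the sacrificed portion of the exponential decay, uniformly in $i$ and in the tail index $m$; equivalently, correctly tracking how the constant in the $L^p$-version of \eqref{equation3.2} degrades in $p$ and in $s$, and then choosing $p$ as a function of $s$ so that the resulting bound is summable and beats $2^{-s\gamma/2}$. The Fourier-support/almost-orthogonality bookkeeping in step (2) is technical but routine, following \cite{dinglai,se}; the genuinely new input is the interplay of Lemma~\ref{lem2.1}, the John–Nirenberg tail estimate, and Orlicz interpolation, which is precisely what the remark preceding Lemma~\ref{lem2.1} flags as the novel ingredient.
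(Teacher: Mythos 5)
Your outline has a genuine gap at the point where you invoke Lemma \ref{lem2.1}. You propose to ``upgrade the local $L^1$ information on $h_Q$ to an $L^q$ estimate,'' but $h_Q$ carries no higher integrability whatsoever: in Lemma \ref{lem3.2} the only hypothesis is $\|h_Q\|_{L^1}\leq |Q|$, and even the $L\log L$ normalization available in the proof of Theorem \ref{thm1.2} does not put $h_Q$ in any $L^q$, $q>1$. Lemma \ref{lem2.1} requires \emph{both} a small normalized $L^1$ average \emph{and} the a priori bound $\|f\|_{L^p(\log L)^{-\alpha},Q}\leq 1$; an arbitrary $L^1$ function cannot be fed into it. In the paper's argument the lemma is never applied to $h_Q$ at all: after expanding the square of the $L^2$ norm (for a fixed $\nu$, cf. \eqref{equation3.4}) and exploiting the tube geometry of ${\rm supp}\,K^s_{j\nu}$, it is applied to the composite function $\sum_{i\leq j}\sum_{I}|b-\langle b\rangle_I||T^s_{i\nu}h_I|$ on the cube $R^x_{j,s}$, whose small $L^1$ average \eqref{equation3.5} encodes the geometric gain $2^{-\gamma s(d-1)}$ and whose $L^p(\log L)^{-p}$ bound \eqref{equation3.6} encodes John--Nirenberg through duality with $L^{p'}(\log L)^{p'}$ test functions and the maximal function. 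That two-norm estimate for the BMO-weighted sums over smaller scales is the missing mechanism in your plan; the level-set decomposition of $b-\langle b\rangle_{B_Q}$ with $e^{-c2^m}$ tails plus Lebesgue-exponent interpolation with $p\asymp s\gamma$ does not substitute for it, because the quantity being estimated is not a fixed operator applied to $f$ for which an interpolation of operator norms makes sense.

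Two further points would need repair even for the part you consider routine. First, you cannot handle the ``bounded part'' of $b-\langle b\rangle_Q$ by citing \eqref{equation3.2} as a black box: once you insert a $Q$-dependent factor $\beta_Q(x)=(b(x)-\langle b\rangle_Q)\chi_{\{|b-\langle b\rangle_{B_Q}|\leq A\}}(x)$, the summands are no longer of the form $T^s_{j\nu}\tilde h_Q$, so you must rerun the Seeger/Ding--Lai expansion of the square (diagonal plus $i<j$ cross terms, moving $K^s_{j\nu}$ onto the other factor and using $|x+2\mathcal{R}^s_{j\nu}|\lesssim 2^{jd-\gamma s(d-1)}$); this does work and only costs $A^2$, but it is a re-proof, not an application, of \eqref{equation3.2}. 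Second, the almost-orthogonality in $\nu$ does not come from kernel decay of $G^s_\nu$ at scale $2^j$ (its multiplier depends only on $\xi/|\xi|$ and is not scale-localized); it comes from the Fourier overlap bound $\sup_{\xi\neq 0}\sum_\nu|\psi(2^{s\gamma}\langle e^s_\nu,\xi/|\xi|\rangle)|^2\lesssim 2^{s\gamma(d-2)}$ used in \eqref{equation3.3}, after which one needs, for each single $\nu$, the refined bound \eqref{equation3.4} with gain $2^{-s\gamma(2d-5/2)}$ so that the card$(\mathfrak{E}^s)\lesssim 2^{s\gamma(d-1)}$ summation and the square-function factor still leave $2^{-s\gamma/2}$. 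Your proposal does not track this exponent bookkeeping, and without the correct single-$\nu$ estimate the final gain cannot be reached.
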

\begin{proof} For   $f\in L^2(\mathbb{R}^d)$, it follows from Cauchy-Schwarz inequality that
\begin{eqnarray*}
&&\Big|\sum_{j}\sum_{Q\in\mathcal{S}_{j-s}}\sum_{\nu}\int_{\mathbb{R}^d}G_{\nu}^s\big((b-\langle b\rangle_Q)T_{j\nu}^sh_Q\big)(x)f(x)dx\Big|\\
&&\quad=\Big|\int_{\mathbb{R}^d}
\sum_{\nu}G_{\nu}^sf(x)\sum_{j}\sum_{Q\in\mathcal{S}_{j-s}}(b(x)-\langle b\rangle_Q)T_{j\nu}^sh_Q(x)dx\Big|\\
&&\quad\leq\Big\|\Big(\sum_{\nu}|G_{\nu}^sf|^2\Big)^{\frac{1}{2}}\Big\|_{L^2(\mathbb{R}^d)}
\Big(\sum_{\nu}\Big\|\sum_{j}\sum_{Q\in\mathcal{S}_{j-s}}(b-\langle b\rangle_Q)T_{j\nu}^sh_Q\Big\|^2_{L^2(\mathbb{R}^d)}\Big)^{\frac{1}{2}}.
\end{eqnarray*}
Plancherel's theorem, via the estimate
$$\sup_{\xi\not=0}\sum_{\nu}|\psi(2^{s\gamma}\langle e^s_{\nu},\xi/|\xi|\rangle)|^2\lesssim 2^{s\gamma(d-2)}
$$
(see \cite[inequality (3.1)]{dinglai}, implies that
\begin{eqnarray}\label{equation3.3}\Big\|\Big(\sum_{\nu}|G_{\nu}^sf|^2\Big)^{\frac{1}{2}}\Big\|_{L^2(\mathbb{R}^d)}^2 &=&\sum_{\nu}\int_{\mathbb{R}^d}|\psi(2^{s\gamma}\langle \xi/|\xi|, e_{\nu}^s\rangle)|^2|\widehat{f}(\xi)|^2d\xi\\ &\lesssim&2^{s\gamma(d-2)}\|f\|_{L^2(\mathbb{R}^d)}^2.\nonumber\end{eqnarray}
Recall that ${\rm card}(\mathfrak{E}^s)\lesssim 2^{\gamma s(d-1)}$. It suffices to prove that for each fixed $\nu\in\Lambda_s$,
\begin{eqnarray}\label{equation3.4}
&&\Big\|\sum_{j}\sum_{Q\in\mathcal{S}_{j-s}}(b-\langle b\rangle_Q)T_{j\nu}^sh_Q\Big\|^2_{L^2(\mathbb{R}^d)}\lesssim  2^{-s\gamma(2d-\frac{5}{2})}\|\Omega\|^2_{L^{\infty}(S^{d-1})}\sum_{Q\in\mathcal{S}}\|h_Q\|_{L^1(\mathbb{R}^d)}.
\end{eqnarray}
By homogeneity, we may assume that $\|\Omega\|_{L^{\infty}(S^{d-1})}=\|b\|_{{\rm BMO}(\mathbb{R}^d)}=1.$

We now prove (\ref{equation3.4}).   Write
\begin{eqnarray}\label{equation3.4'}
&&\Big\|\sum_{j}\sum_{Q\in\mathcal{S}_{j-s}}(b-\langle b\rangle_Q)T_{j\nu}^sh_Q\Big\|^2_{L^2(\mathbb{R}^d)}\\
&&\quad=\sum_{j}\sum_{Q\in\mathcal{S}_{j-s}}\sum_{I\in\mathcal{S}_{j-s}}\int_{\mathbb{R}^d}h_Q(x)
T_{j\nu}^s\Big(\big(b-\langle b\rangle_Q)(b-\langle b\rangle_{I}) T^s_{j\nu}h_{I}\Big)(x)dx\nonumber\\
&&\quad+2\sum_{j}\sum_{Q\in\mathcal{S}_{j-s}}\sum_{i<j}\sum_{I\in\mathcal{S}_{i-s}}\int_{\mathbb{R}^d}h_Q(x)
T_{j\nu}^s\Big(\big(b-\langle b\rangle_Q)(b-\langle b\rangle_{I})T^s_{i\nu}h_{I}\Big)(x)dx.\nonumber
\end{eqnarray}
For each fixed $j,\,\nu$ and $s$, let $$\widetilde{\mathcal{R}}_{j\nu}^s=\{y\in\mathbb{R}^d:\, |\langle y,\,e^s_{\nu}\rangle|\leq 2^{j+2},\,|y-\langle y,\,e_{\nu}^s\rangle e^{s}_{\nu}|\leq 2^{j+2-s\gamma}\},$$
and
$${\mathcal{R}}_{j\nu}^s=\widetilde{\mathcal{R}}_{j\nu}^s+\widetilde{\mathcal{R}}_{j\nu}^s.$$   As it was pointed out by Seeger \cite[p.\,99]{se} (see also Ding and Lai \cite[p.\,1659]{dinglai}), when $i\leq j$, we have that \begin{eqnarray*}&&\sum_{I\in\mathcal{S}_{i-s}}T_{j\nu}^s\Big(\big(b-\langle b\rangle_Q)(b-\langle b\rangle_{I})T^s_{i\nu}h_{I}\Big)(x)\\
&&\quad=\sum_{I\in \mathcal{S}_{i-s}, \atop{I\cap \{x+\mathcal{R}_{j\nu}^s\}\not=\emptyset}} \int_{\mathbb{R}^d}K_{j\nu}^s(x-y)(b(y)-\langle b\rangle_Q)(b(y)-\langle b\rangle_I)T_{i\nu}^sh_I(y)dy.
\end{eqnarray*}Observe that $$|x+2\mathcal{R}_{j\nu}^s|\lesssim 2^{jd-\gamma s(d-1)}.$$
For each fixed $Q\in\mathcal{S}_{j-s}$ and $x\in Q$, we can find a cube ${R}_{j,s}^x$ centered at $x$, such that $Q\subset {R}_{j,s}^x$,
$|{R}_{j,s}^x|\approx 2^{jd}$, and
$$\bigcup_{i\leq j}\bigcup_{I\in \mathcal{S}_{i-s}\atop{I\cap \{x+\mathcal{R}_{j\nu}^s\}\not=\emptyset}}I \subset x+2\mathcal{R}_{j\nu}^s\subset  R_{j,\,s}^{x}.$$
For each fixed $i\leq j$, $I\in \mathcal{S}_{i-s}$, let $I^s=2^{s+4}dI$. Then $|\langle b\rangle_I-\langle b\rangle_{I^s}|\lesssim s.$
Observe that for each $r\in [1,\,\infty)$,
$$\|b-\langle b\rangle_{I^s}\|_{L^{r'}(I^s)}\lesssim 2^{id/r'},\,\,\|K_{i\nu}^s\|_{L^r(\mathbb{R}^d)}\lesssim 2^{-\gamma s(d-1)/r}2^{-id/r'},$$
and that
$$\sum_{i\leq j}\sum_{I\in \mathcal{S}_{i-s}, \atop{I\cap \{x+\mathcal{R}_{j\nu}^s\}\not=\emptyset}}\|h_I\|_{L^1(\mathbb{R}^d)}\lesssim
\sum_{i\leq j}\sum_{I\in \mathcal{S}_{i-s}, \atop{I\cap \{x+\mathcal{R}_{j\nu}^s\}\not=\emptyset}}|I|\lesssim 2^{jd-\gamma s(d-1)}.
$$
Recall that ${\rm supp}\, K^s_{i\nu}\subset \{x:\,|x|\leq 2^{i+2}\}$. Thus for each $I\in \mathcal{S}_{i-s}$, ${\rm supp}\,T^s_{i\nu}h_I\subset I^s$. A trivial computation involving H\"older's inequality  gives us that
\begin{eqnarray}\label{equation3.5}
&&\Big\|\sum_{i\leq j}\sum_{I\in \mathcal{S}_{i-s}, \atop{I\cap \{x+\mathcal{R}_{j\nu}^s\}\not=\emptyset}}|b-\langle b\rangle_I||T_{i\nu}^sh_I|\Big\|_{L^1(\mathbb{R}^d)}\\
&&\quad\lesssim \sum_{i\leq j}\sum_{I\in \mathcal{S}_{i-s}, \atop{I\cap \{x+\mathcal{R}_{j\nu}^s\}\not=\emptyset}}\big(s\|T_{i\nu}^sh_I\|_{L^1(\mathbb{R}^d)}+\|(b-\langle b\rangle_{I^s})T_{i\nu}^sh_I\|_{L^1(\mathbb{R}^d)}\big)\nonumber\\
&&\quad\lesssim s2^{-\gamma s(d-1)}\sum_{i\leq j}\sum_{I\in \mathcal{S}_{i-s}, \atop{I\cap \{x+\mathcal{R}_{j\nu}^s\}\not=\emptyset}}\|h_I\|_{L^1(\mathbb{R}^d)}\nonumber\\
&&\qquad+\sum_{i\leq j}\sum_{I\in \mathcal{S}_{i-s}, \atop{I\cap \{x+\mathcal{R}_{j\nu}^s\}\not=\emptyset}}\|b-\langle b\rangle_{I^s}\|_{L^{r'}(I^s)}
\|K_{i\nu}^s\|_{L^{r}(\mathbb{R}^d)}\|h_I\|_{L^1(\mathbb{R}^d)} \nonumber\\
&&\quad\lesssim 2^{-2\gamma s(d-1)/r}2^{jd}.\nonumber
\end{eqnarray}
Now we claim that for $p\in (1,\,\infty)$,
\begin{eqnarray}\label{equation3.6}
\Big\|\sum_{i\leq j}\sum_{I\in \mathcal{S}_{i-s}, \atop{I\cap \{x+\mathcal{R}_{j\nu}^s\}\not=\emptyset}}\big|(b-\langle b\rangle_I)T_{i\nu}^sh_I\big|\Big\|_{L^p(\log L)^{-p},\,R_{j,s}^x}\lesssim s.
\end{eqnarray}
To prove this, let ${\rm supp}\,f\subset R_{j,s}^x$ with $\|f\|_{L^{p'}(\log L)^{p'},\,R_{j,s}^x}=1$, namely,
$$\int_{R_{j,s}^x}\Phi_{p',\,p'}(|f(z)|)dz\leq |R_{j,s}^x|.$$
Let $M$ be the Hardy-Littlewood maximal operator.  A straightforward computation involving inequality (\ref{eq2.holder}) leads to that for $I\in\mathcal{S}_{i-s}$ and $y\in I$,
\begin{eqnarray*}
|K_{i\nu}^s|*\big(|b-\langle b\rangle_{I}||f|\big)(y)&\lesssim &s|K_{i\nu}^s|*|f|(y)+|K_{i\nu}^s|*\big(|b-\langle b\rangle_{I^s}||f|\big)(y)\\
&\lesssim&|K_{i\nu}^s|* {\rm exp}\Big(\frac{|b-\langle b\rangle_{I^s}|}{C\|b\|_{{\rm BMO}(\mathbb{R}^d)}}\Big) (y)+s|K_{i\nu}^s|*(\Phi(|f|))(y)\\
&\lesssim &1+s\inf_{z\in I}M(\Phi(f))(z).
\end{eqnarray*}
Recall that ${\rm supp}\,f\subset R_{j,s}^x$, we then have that
$$\int_{R_{j,s}^x}M(\Phi(f))(y)dy\lesssim 2^{jd/p}\|M(\Phi(|f|))\|_{L^{p'}(\mathbb{R}^d)}\lesssim 2^{jd/p}\|\Phi(|f|)\|_{L^{p'}(\mathbb{R}^d)}\lesssim 2^{jd}.
$$
Therefore,
\begin{eqnarray*}
&&\sum_{i\leq j}\sum_{I\in \mathcal{S}_{i-s}, \atop{I\cap \{x+\mathcal{R}_{j\nu}^s\}\not=\emptyset}}
\int_{R_{j,s}^x}\big|f(y)(b(y)-\langle b\rangle_I)T_{i\nu}^sh_I(y)\big|dy\\
&&\quad\leq\sum_{i\leq j}\sum_{I\in \mathcal{S}_{i-s}, \atop{I\cap \{x+\mathcal{R}_{j\nu}^s\}\not=\emptyset}}
\big\||h_I||K_{i\nu}^s|*\big(|b-\langle b\rangle_I||f|\big)\big\|_{L^1(\mathbb{R}^d)}\\
&&\quad\lesssim s\sum_{i\leq j}\sum_{I\in \mathcal{S}_{i-s}, \atop{I\cap \{x+\mathcal{R}_{j\nu}^s\}\not=\emptyset}}
\big\|h_I\|_{L^1(\mathbb{R}^d)}+\sum_{i\leq j}\sum_{I\in \mathcal{S}_{i-s}, \atop{I\cap \{x+\mathcal{R}_{j\nu}^s\}\not=\emptyset}}|I|\inf_{z\in I}M(\Phi(f))(z))\\
&&\quad\lesssim s2^{-\gamma s(d-1)}2^{jd}+s\int_{R_{j,s}^x}M(\Phi(f))(y)dy\lesssim s2^{jd}.
\end{eqnarray*}
This, via inequality (\ref{eq2.twonorms}) leads to  (\ref{equation3.6}).

Inequalities (\ref{equation3.5}) and (\ref{equation3.6}), via Lemma \ref{lem2.1}, state that for each fixed $\varepsilon\in (0,\,1)$, we
can choose $q\in (1,\,2)$ which is close to $1$ sufficiently, such that
$$\Big\|\sum_{i\leq j}\sum_{I\in \mathcal{S}_{i-s}, \atop{I\cap \{x+\mathcal{R}_{j\nu}^s\}\not=\emptyset}}\big|(b-\langle b\rangle_I)T_{i\nu}^sh_I\big|\Big\|_{L^q(R_{j,s}^x)}\lesssim 2^{jd/q}2^{-2\varepsilon\gamma s(d-1)}.
$$
Let $j\in\mathbb{Z}$, $Q\in\mathcal{S}_{j-s}$ and $x\in Q$. Another application of H\"older's inequality yields
\begin{eqnarray*}&&\sum_{i\leq j}\sum_{I\in \mathcal{S}_{i-s}, \atop{I\cap \{x+\mathcal{R}_{j\nu}^s\}\not=\emptyset}} \Big|\int_{\mathbb{R}^d}K_{j\nu}^s(x-y)(b(y)-\langle b\rangle_Q)(b(y)-\langle b\rangle_I)T_{i\nu}^sh_I(y)dy\Big|\\
&&\quad \lesssim 2^{-jd}\Big(\int_{R_{j,s}^x}|b(y)-\langle b\rangle_Q|^{q'}dy\Big)^{\frac{1}{q'}}\Big\|\sum_{i\leq j}\sum_{I\in \mathcal{S}_{i-s}, \atop{I\cap \{x+\mathcal{R}_{j\nu}^s\}\not=\emptyset}}\big|(b-\langle b\rangle_I)T_{i\nu}^sh_I\big|\Big\|_{L^q(R_{j,s}^x)}\\
&&\quad\lesssim s2^{-2\varepsilon\gamma s(d-1)}.
\end{eqnarray*}
since $|\langle b\rangle_Q-\langle b\rangle_{R_{j,s}^x}|\lesssim s$.
This, in turn, implies that
$$\Big\|\sum_{j}\sum_{Q\in\mathcal{S}_{j-s}}(b-\langle b\rangle_Q)T_{j\nu}^sh_Q\Big\|^2_{L^2(\mathbb{R}^d)}\lesssim s2^{-2\varepsilon\gamma s(d-1)}\sum_{Q}\|h_Q\|_{L^1(\mathbb{R}^d)}.$$
We choose $\varepsilon\in (0,\,1)$   such that $2\varepsilon (d-1)=2d-7/3$. The last estimate, along with (\ref{equation3.4'}), establishes (\ref{equation3.4}) and then completes the proof of Lemma \ref{lem3.2}.
\end{proof}

{\it Proof of the inequality (\ref{equa3.u2})}. It  follows from Lemma \ref{lem3.2} that
\begin{eqnarray*}&&|\{x\in\mathbb{R}^d\backslash E:\, |{\rm U}_2h(x)|>\frac{1}{12}\}|\le  \|{\rm U}_2h\|_{L^2(\mathbb{R}^d)}^2\\
&&\quad\leq \Big(\sum_{i=0}^{\infty}\sum_{s> N_0i}\Big\|\sum_j\sum_{Q\in\mathcal{S}_{j-s}}\sum_{\nu}G_{\nu}^s\big[\big(b-\langle b\rangle_Q\big)T^i_{j}h_Q\big]\Big\|_{L^2(\mathbb{R}^d)}\Big)^2\\
&&\quad\lesssim\Big(\sum_{i\geq 0}2^i\sum_{s>N_0i}2^{-s\gamma/4}\big(\sum_{Q}\|h_Q\|_{L^1(\mathbb{R}^d)}\big)^{\frac{1}{2}}\Big)^2\lesssim \int_{\mathbb{R}^d}|f(x)| dx,\end{eqnarray*}
if we choose $N_0\in \mathbb{N}$ and $\gamma\in (0,\,1)$  such that $N_0\gamma>16$. This proves (\ref{equa3.u2}).\qed

\section{proof of inequality (\ref{equa3.u3})}
To prove (\ref{equa3.u3}), we will employ some lemmas.
\begin{lemma}\label{lem4.1}
Let $m$ be a complex-valued bounded function on $\mathbb{R}^d\backslash \{0\}$ such that
$$|\partial^{\alpha}_{\xi}m(\xi)|\le A|\xi|^{-|\alpha|}
$$
for all multi indices $\alpha$ with $|\alpha|\leq \lfloor d/2\rfloor+1$, where and in the following,  $\lfloor d/2\rfloor$ denote the integer part of $d/2$. Let $T_m$ be the multiplier operator defined by
$$\widehat{T_mf}(\xi)=m(\xi)\widehat{f}(\xi).
$$
Then for $w\in A_2(\mathbb{R}^d)$, $T_m$ is bounded on $L^2(\mathbb{R}^d,w)$ with bound $C_{d,[w]_{A_2}}(\|m\|_{L^{\infty}(\mathbb{R}^d)}+A)$, and is bounded from $L^1(\mathbb{R}^d)$ to $L^{1,\,\infty}(\mathbb{R}^d)$ with bound $C_{d}(\|m\|_{L^{\infty}(\mathbb{R}^d)}+A)$.
\end{lemma}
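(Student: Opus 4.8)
The plan is to show that $T_m$ is a Calderón--Zygmund operator whose associated kernel satisfies, quantitatively in $\|m\|_{L^\infty(\mathbb{R}^d)}+A$, the $L^1$-type size and Hörmander regularity conditions, and then to invoke the classical weak-type and weighted theory for such operators. By linearity it suffices to treat the case $\|m\|_{L^\infty(\mathbb{R}^d)}+A=1$ and to check that all constants below depend only on $d$ (on $d$ and $[w]_{A_2}$ in the weighted statement).

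First I would carry out a Littlewood--Paley decomposition: fix $\phi\in C_0^\infty(\mathbb{R}^d)$ supported in $\{1/2\le|\xi|\le 2\}$ with $\sum_{k\in\mathbb{Z}}\phi(2^{-k}\xi)=1$ for $\xi\neq 0$, set $m_k(\xi)=m(\xi)\phi(2^{-k}\xi)$ and let $K_k$ be the inverse Fourier transform of $m_k$. Put $N=\lfloor d/2\rfloor+1$, so that $2N>d$. Leibniz' rule together with the hypothesis gives $|\partial^\beta m_k(\xi)|\lesssim 2^{-k|\beta|}\mathbf{1}_{\{|\xi|\sim 2^k\}}$ for $|\beta|\le N$, whence by Plancherel
\[
\|K_k\|_{L^2(\mathbb{R}^d)}\lesssim 2^{kd/2},\qquad \big\||\cdot|^{N}K_k\big\|_{L^2(\mathbb{R}^d)}\lesssim 2^{k(d/2-N)},
\]
and, applying the same estimates to $\xi_l m_k$, $\|\nabla K_k\|_{L^2(\mathbb{R}^d)}\lesssim 2^{k(1+d/2)}$ and $\||\cdot|^{N}\nabla K_k\|_{L^2(\mathbb{R}^d)}\lesssim 2^{k(1+d/2-N)}$. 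Splitting $\mathbb{R}^d$ into dyadic shells $\{2^{-j-1}\le|x|<2^{-j}\}$ and applying Cauchy--Schwarz on each shell (using $\|K_k\|_{L^2}$ when the shell is close to the origin and $\||\cdot|^NK_k\|_{L^2}$ when it is far, and using $2N>d$) yields, with $\varepsilon=\min(N-d/2,\,d/2)>0$,
\[
\int_{2^{-j-1}\le|x|<2^{-j}}|K_k(x)|\,dx\lesssim 2^{-\varepsilon|j-k|},\qquad \int_{2^{-j-1}\le|x|<2^{-j}}|\nabla K_k(x)|\,dx\lesssim 2^{k}2^{-\varepsilon|j-k|}.
\]

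Summing in $k$, the first estimate gives $\int_{R\le|x|<2R}|K(x)|\,dx\lesssim 1$ for $K=\sum_kK_k$ (interpreted distributionally away from the origin), which is the required size condition. For the regularity condition I would estimate, for each $k$, $\int_{|x|>2|y|}|K_k(x-y)-K_k(x)|\,dx\lesssim \min\big(2^k|y|,\,(2^k|y|)^{-\varepsilon}\big)$ --- the first bound from the mean value theorem and $\|\nabla K_k\|_{L^1(\mathbb{R}^d)}\lesssim 2^k$ (a consequence of the second shell estimate), the second from summing the first shell estimate over shells with $|x|>|y|$ --- and then sum over $k\in\mathbb{Z}$ to obtain $\int_{|x|>2|y|}|K(x-y)-K(x)|\,dx\lesssim 1$. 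Since $T_m$ is bounded on $L^2(\mathbb{R}^d)$ with norm $\|m\|_{L^\infty(\mathbb{R}^d)}$ by Plancherel, and $T_mf(x)=\int K(x-y)f(y)\,dy$ for $f\in C_0^\infty(\mathbb{R}^d)$ off the support of $f$, the operator $T_m$ is a Calderón--Zygmund operator with data controlled by $\|m\|_{L^\infty(\mathbb{R}^d)}+A$; the classical Calderón--Zygmund weak-type theory then yields $\|T_mf\|_{L^{1,\infty}(\mathbb{R}^d)}\lesssim_d(\|m\|_{L^\infty(\mathbb{R}^d)}+A)\|f\|_{L^1(\mathbb{R}^d)}$, and the weighted norm inequalities for Calderón--Zygmund (Mikhlin) multipliers, in their quantitative form, give $\|T_mf\|_{L^2(\mathbb{R}^d,w)}\lesssim_{d,[w]_{A_2}}(\|m\|_{L^\infty(\mathbb{R}^d)}+A)\|f\|_{L^2(\mathbb{R}^d,w)}$ for $w\in A_2(\mathbb{R}^d)$.

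I expect the main obstacle to be the kernel estimates in the second paragraph: one has to verify that the number $\lfloor d/2\rfloor+1$ of derivatives is exactly what makes the shellwise Cauchy--Schwarz argument converge (this is precisely where $2N>d$ enters), and that it still suffices to extract the Hörmander regularity from the weighted $L^2$ bounds on $\nabla K_k$. Once the kernel is shown to satisfy the size and Hörmander conditions with constants proportional to $\|m\|_{L^\infty(\mathbb{R}^d)}+A$, both the weak $(1,1)$ and the $L^2(w)$ conclusions are routine appeals to standard (weighted) Calderón--Zygmund theory, with the dependence on $[w]_{A_2}$ inherited from the known quantitative estimates.
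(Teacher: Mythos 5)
Your derivation of the dyadic kernel bounds and of the $L^1$-H\"ormander condition is correct, and the weak $(1,1)$ half of your argument is fine: $L^2$-boundedness (norm $\|m\|_{L^\infty}$, by Plancherel) together with $\int_{|x|>2|y|}|K(x-y)-K(x)|\,dx\lesssim A$ gives the weak $(1,1)$ bound $C_d(\|m\|_{L^\infty}+A)$ by the classical Calder\'on--Zygmund/H\"ormander argument; this is consistent with (indeed more detailed than) the paper, which simply defers to Kurtz--Wheeden.

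The gap is in the weighted half. Having only the $L^1$-H\"ormander condition for the kernel, you cannot invoke ``the weighted norm inequalities for Calder\'on--Zygmund operators'': it is known that an $L^2$-bounded operator whose kernel satisfies H\"ormander's integral condition need not be bounded on $L^p(w)$ for $A_p$ weights (this is the content of the counterexamples of Martell, P\'erez and Trujillo-Gonz\'alez), and with only $\lfloor d/2\rfloor+1$ derivatives of $m$ the kernel is in general not pointwise bounded by $|x|^{-d}$, let alone Dini-continuous, so the standard weighted CZ theorem simply does not apply. This weighted $L^2(\mathbb{R}^d,w)$ estimate, with explicit dependence on $[w]_{A_2}$ and linear dependence on $\|m\|_{L^\infty}+A$, is exactly where the lemma has content, and it is why the paper's proof consists of repeating (with constants tracked) the argument of Kurtz and Wheeden: that argument does not go through the $L^1$-H\"ormander condition but through shellwise $L^2$ estimates for $K(x-y)-K(x)$ (which your Plancherel bounds do furnish, without any Cauchy--Schwarz loss) combined with a pointwise sharp-maximal-function estimate of the form $M^{\#}(T_mf)\lesssim (M(|f|^{r}))^{1/r}$ and Fefferman--Stein/good-$\lambda$ machinery. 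Note also that even this route, run naively, produces at $p=2$ weights in classes of the form $A_{2/r}$ with $r>d/(\lfloor d/2\rfloor+1)$, which is strictly smaller than $A_2$; so obtaining the statement for the weights the paper actually feeds into the lemma is the delicate step, and it cannot be settled by a black-box appeal to generic weighted Calder\'on--Zygmund theory. To repair your proof you would need to replace the last sentence of your argument by this Kurtz--Wheeden-type analysis (or an equivalent quantitative substitute), carried out for the weights in question.
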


The boundedness of $T_m$ on $L^2(\mathbb{R}^d,\,w)$ with $w\in A_2(\mathbb{R}^d)$ and from $L^1(\mathbb{R}^d,\,w)$ to $L^{1,\,\infty}(\mathbb{R}^d,\,w)$ with $w\in A_1(\mathbb{R}^d)$ was proved by Kurtz and Wheeden \cite{kurtzwh}. Repeating the proof of
Theorem 1 in \cite{kurtzwh}, we can verify the bound of $T_m$ on $L^2(\mathbb{R}^d,\,w)$ ($w\in A_2(\mathbb{R}^d)$) is less than $C_{d,[w]_{A_2}}(\|m\|_{L^{\infty}(\mathbb{R}^d)}+A)$, while the bound from  $L^1(\mathbb{R}^d)$ to $L^{1,\,\infty}(\mathbb{R}^d)$  is less than $C_{d}(\|m\|_{L^{\infty}(\mathbb{R}^d)}+A)$.

Let $\eta\in C^{\infty}_0(\mathbb{R}^d)$ be a radial function such that  ${\rm supp}\,\eta\subset\{|\xi|\leq 2\}$, $0\leq \eta\leq 1$ and $\eta(\xi)=1$ when $|\xi|\leq 1$.  Define $\varphi_k(\xi)=\eta(2^k\xi)-\eta(2^{k+1}\xi)$, then ${\rm supp}\,\varphi_k\subset \{2^{-k-1}\leq |\xi|\leq 2^{-k+1}\}$. Define  multuplier operators $V_k$ and $W_k$  by
$$\widehat{V_kf}(\xi)=\eta(2^k\xi)\widehat{f}(\xi),\,\,\widehat{W_kf}(\xi)=\varphi_k(\xi)\widehat{f}(\xi),
$$
respectively. Observe that for any $m\in\mathbb{Z}$,
$I=V_m+\sum_{k<m}W_k$.
\begin{lemma}\label{lem4.3} Let $b\in {\rm BMO}(\mathbb{R}^d)$. Under  the same hypothesis and notations as in Lemma \ref{lem3.2}, we have that  for $m\in\mathbb{Z}$ and   $s\geq 100$,
\begin{eqnarray}\label{equation3.7}\Big\|\sum_{j}\sum_{\nu}G_{\nu,\,b}^sT_{j\nu}^sH_{j-s}\Big\|^2_{L^2(\mathbb{R}^d)}\lesssim \|\Omega\|_{L^{\infty}(S^{d-1})}^22^{-s\gamma/2}\sum_{Q}\|h_Q\|_{L^1(\mathbb{R}^d)},\end{eqnarray}
\begin{eqnarray}\label{equation3.8}&&\Big\|\sum_{j}\sum_{\nu}G_{\nu,b}^sV_mT^s_{j\nu}H_{j-s}\Big\|^2_{L^2(\mathbb{R}^d)}\lesssim \|\Omega\|_{L^{\infty}(S^{d-1})}^22^{-s\gamma/2}\sum_{Q}\|h_Q\|_{L^1(\mathbb{R}^d)}.\end{eqnarray}
where and in the following, $G_{\nu,b}^s$ is the commutator of $G_{\nu}^s$ with $b$, and  for $j\in\mathbb{Z}$, $H_j(x)=\sum_{Q\in\mathcal{S}_j}h_Q(x).$
\end{lemma}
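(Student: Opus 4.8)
### Proof proposal for Lemma \ref{lem4.3}

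The plan is to treat \eqref{equation3.7} first and then deduce \eqref{equation3.8} by the same scheme, since $V_m$ is a nice Fourier multiplier that is handled uniformly in $m$. For \eqref{equation3.7}, the starting point is the same duality/Cauchy--Schwarz splitting used in the proof of Lemma \ref{lem3.2}: test against $f\in L^2(\mathbb{R}^d)$, move the operators $G_{\nu}^s$ (and the commutator structure $G^s_{\nu,b}$) onto $f$, and bound the result by
$$\Big\|\Big(\sum_{\nu}|G_{\nu}^sf|^2\Big)^{1/2}\Big\|_{L^2}\Big(\sum_{\nu}\Big\|\sum_j (b-\langle b\rangle_Q)T^s_{j\nu}H_{j-s}\Big\|^2_{L^2}\Big)^{1/2}$$
plus an analogous term where $f$ is replaced by $(b-\langle b\rangle)f$-type expressions coming from expanding the commutator $G^s_{\nu,b}=b G^s_\nu-G^s_\nu b$. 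The first factor is controlled by $2^{s\gamma(d-2)/2}\|f\|_{L^2}$ exactly as in \eqref{equation3.3}; for the commutator piece one needs the weighted/exp-$L$ estimate on the square function $(\sum_\nu|G^s_\nu\cdot|^2)^{1/2}$ together with the John--Nirenberg inequality, which is where the ``observation of Hyt\"onen and P\'erez'' and the interpolation with change of measure (Lemma \ref{lem4.1} with an $A_2$ weight) enter. The key inner estimate reduces, after accounting for $\mathrm{card}(\mathfrak{E}^s)\lesssim 2^{\gamma s(d-1)}$, to a per-$\nu$ bound of the form
$$\Big\|\sum_j(b-\langle b\rangle_Q)T^s_{j\nu}H_{j-s}\Big\|^2_{L^2}\lesssim 2^{-s\gamma(2d-5/2)}\sum_Q\|h_Q\|_{L^1},$$
which is precisely \eqref{equation3.4} with $H_{j-s}=\sum_{Q\in\mathcal{S}_{j-s}}h_Q$; this we already have from Lemma \ref{lem3.2}, so the main work is only in disentangling the commutator $G^s_{\nu,b}$ from the inner singular-integral part $(b-\langle b\rangle_Q)T^s_{j\nu}$.

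Concretely, the second step is the commutator bookkeeping. Write, for each $j$ and $Q\in\mathcal{S}_{j-s}$,
$$G^s_{\nu,b}\big(T^s_{j\nu}h_Q\big)=G^s_{\nu,b}\big((b-\langle b\rangle_Q)T^s_{j\nu}h_Q\big)-\text{(error)}+G^s_\nu\big((b-\langle b\rangle_Q)^2T^s_{j\nu}h_Q-\cdots\big),$$
i.e. insert and remove $\langle b\rangle_Q$ so that the symbol $b$ appearing in the commutator is replaced by $b-\langle b\rangle_Q$, at the cost of terms in which one or two factors of $(b-\langle b\rangle_Q)$ are attached to $T^s_{j\nu}h_Q$. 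Each of the resulting pieces is of one of the two types: (i) $G^s_\nu$ applied to $(b-\langle b\rangle_Q)^a T^s_{j\nu}h_Q$ with $a\in\{1,2\}$, handled by Plancherel for the square function of the $G^s_\nu$'s plus the $L^2$-orthogonality/geometry estimate \eqref{equation3.2} upgraded to carry the extra BMO factors — this is exactly the content (and the proof technique) of Lemma \ref{lem3.2}, with $a=2$ requiring the computation of the $L^p(\log L)^{-p}$ Luxemburg norm in \eqref{equation3.6} iterated once more (hence the appearance of $L(\log L)^2$); and (ii) genuine commutator pieces $[b,G^s_\nu]$ acting on $(b-\langle b\rangle_Q)T^s_{j\nu}h_Q$, which we control via the Kurtz--Wheeden weighted bound of Lemma \ref{lem4.1}: since $[b,G^s_\nu]$ is bounded on $L^2(w)$ for $w\in A_2$ with norm $\lesssim [w]_{A_2}^{O(1)}\log(\mathrm e+\|\psi(2^{s\gamma}\langle e^s_\nu,\cdot\rangle)\|_{\text{symbol class}})\lesssim_{} \|b\|_{\mathrm{BMO}}$ (using that the symbol derivatives give $A=O(2^{s\gamma})$, so the logarithmic dependence only costs a power of $s$), combining with an $A_1$ weight built from a maximal function absorbs $(b-\langle b\rangle_Q)$ into an exponential-integrability gain, and the geometric decay $2^{-s\gamma(d-2)}$ coming from the support of $\psi(2^{s\gamma}\langle e^s_\nu,\cdot\rangle)$ beats the $2^{s\gamma(d-1)}$ cardinality loss with room to spare for the stated $2^{-s\gamma/2}$.

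The third step is the passage to \eqref{equation3.8}: since $V_m$ is a radial Fourier multiplier with $\|V_m\|_{L^2\to L^2}\le 1$ uniformly in $m$, commutes with nothing problematic, and $V_m$ localizes frequencies to $|\xi|\lesssim 2^{-m}$, insert $V_m$ before $T^s_{j\nu}$ and note that $G^s_{\nu,b}V_m=V_m G^s_{\nu,b}+[G^s_\nu V_m,b]-[G^s_\nu,b]V_m$; the multiplier $\varphi$-type commutator estimates again follow from Lemma \ref{lem4.1}, and the $L^2$ bound from \eqref{equation3.7} is inherited because $V_m$ is a contraction on $L^2$ and on $L^2(w)$ for $w\in A_2$ with $[w]_{A_2}$-dependent but $m$-independent norm. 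I expect the main obstacle to be the quantitative control of the weighted operator norm of the commutator $[b,G^s_\nu]$ (and its $V_m$-truncated version) in $s$: one must verify that the price for the large symbol constant $A\sim 2^{s\gamma}$ in Lemma \ref{lem4.1} is only a logarithm, hence a power of $s$, and that this is harmlessly absorbed by the exponential gain $2^{-cs\gamma}$; getting the Luxemburg-norm / interpolation-with-change-of-measure argument to produce a clean power-of-$s$ (rather than an uncontrolled constant) is the delicate point, and it is exactly why the hypothesis is $L(\log L)^2$ and not merely $L\log L$.
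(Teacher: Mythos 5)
There is a genuine gap, and it sits exactly at the point you flag as ``the delicate point.'' In the paper's argument the duality step puts the \emph{entire} commutator structure on the test function: one bounds the pairing by $\big\|\big(\sum_{\nu}|G_{\nu,b}^sf|^2\big)^{1/2}\big\|_{L^2}$ times the square function of the plain inner terms $\sum_jT^s_{j\nu}H_{j-s}$, and the latter is handled by the elementary estimate (\ref{equation3.2}) alone --- no factors $(b-\langle b\rangle_Q)$ are attached to the inner part, no appeal to (\ref{equation3.4}) or to Lemma \ref{lem3.2}, and no terms with $(b-\langle b\rangle_Q)^2$ or a second iteration of the Luxemburg-norm computation arise. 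The only new input is the square-function bound (\ref{equation3.9}) for the commutators $G_{\nu,b}^s$, and your mechanism for it does not work: you assert that the weighted ($A_2$) norm of $[b,G^s_\nu]$ costs only a logarithm of the multiplier constant $A$, hence a power of $s$. Lemma \ref{lem4.1} (Kurtz--Wheeden) is \emph{linear} in $A$, and here $A\sim 2^{s\gamma(\lfloor d/2\rfloor+1)}$ (not $2^{s\gamma}$); the Coifman--Rochberg--Weiss conjugation transfers the full weighted operator norm to the commutator, so this route gives each $G^s_{\nu,b}$ a bound of size $2^{s\gamma(\lfloor d/2\rfloor+1)}$, and summing over the $\approx 2^{s\gamma(d-1)}$ directions produces growth in $s$, not the decay $2^{s\gamma(d-1)-s\gamma/2}$ required in (\ref{equation3.9}). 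The missing idea is the interpolation with change of measures: one first interpolates (Stein--Weiss) between the sharp unweighted Plancherel bound (\ref{equation3.3}), with constant $2^{s\gamma(d-2)}$, and the crude weighted bound for weights $w$ with $w^{1+\epsilon}\in A_2$, taking $\epsilon=2d+6$ so that the crude constant enters only to the small power $t=1/(1+\epsilon)$, which yields the $L^2(w)$ bound $2^{s\gamma(\frac d2-\frac34)}$ of (\ref{equation3.12}); only then does one run the Hyt\"onen--P\'erez/CRW conjugation ${\rm e}^{zb}G^s_\nu({\rm e}^{-zb}\cdot)$ with $|z|\sim\big((1+\epsilon)\|b\|_{{\rm BMO}(\mathbb{R}^d)}\big)^{-1}$ and the Cauchy integral formula to obtain (\ref{equation3.9}). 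Without this step your argument does not close.

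Two secondary points. First, your commutator bookkeeping (inserting $(b-\langle b\rangle_Q)$, generating $(b-\langle b\rangle_Q)^2$ terms, and ``iterating (\ref{equation3.6})'') is both unnecessary and not verified to telescope as written; likewise the operator identity you propose for $G^s_{\nu,b}V_m$ does not hold as stated, and no such identity is needed: since $V_m$ sits on the $H_{j-s}$ side, after duality one only uses $\|V_mg\|_{L^2(\mathbb{R}^d)}\le\|g\|_{L^2(\mathbb{R}^d)}$, which is the paper's one-line passage from (\ref{equation3.7}) to (\ref{equation3.8}). Second, the hypothesis $\Omega\in L(\log L)^2(S^{d-1})$ plays no role in this lemma, which assumes $\Omega\in L^{\infty}(S^{d-1})$; the $L(\log L)^2$ condition enters only when the dyadic pieces $\Omega_i$ are resummed over $i$ and $s$ in Sections 3 and 5, not through any iteration inside Lemma \ref{lem4.3}.
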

\begin{proof} For each fixed $f\in L^2(\mathbb{R}^d)$, we have by Cauchy-Schwarz inequality that
\begin{eqnarray*}
&&\Big|\sum_{\nu}\sum_j\int_{\mathbb{R}^d}G_{\nu,\,b}^sT_{j\nu}^sH_{j-s}(x)f(x)dx\Big|\\
&&\quad=\Big|
\sum_{\nu}\sum_j\int_{\mathbb{R}^d}G_{\nu,\,b}^sf(x)T_{j\nu}^sH_{j-s}(x)dx\Big|\\
&&\quad\leq\Big\|\Big(\sum_{\nu}|G_{\nu,\,b}^sf|^2\Big)^{1/2}\Big\|_{L^2(\mathbb{R}^d)}\Big\|\Big(\sum_{\nu}
\Big|\sum_jT_{j\nu}^sH_{j-s}\Big|^2\Big)^{1/2}\Big\|_{L^2(\mathbb{R}^d)}.
\end{eqnarray*}
It follows from (\ref{equation3.2}) that
$$\Big\|\Big(\sum_{\nu}\Big|\sum_jT_{j\nu}^sH_{j-s}\Big|^2\Big)^{1/2}\Big\|_{L^2(\mathbb{R}^d)}^2\lesssim 2^{-s\gamma(d-1)}\|\Omega\|^2_{L^{\infty}(S^{d-1})}\sum_{Q}\|h_Q\|_{L^1(\mathbb{R}^d)}.
$$
On the other hand, we have by Cauchy-Schwarz inequality  that
\begin{eqnarray*}
&&\Big|\sum_{\nu}\sum_j\int_{\mathbb{R}^d}G_{\nu,\,b}^sV_mT_{j\nu}^sH_{j-s}(x)f(x)dx\Big|\\
&&\quad\leq\Big\|\Big(\sum_{\nu}|G_{\nu,\,b}^sf|^2\Big)^{1/2}\Big\|_{L^2(\mathbb{R}^d)}
\Big\|\Big(\sum_{\nu}\Big|V_m\sum_jT_{j\nu}^sH_{j-s}\Big|^2\Big)^{1/2}\Big\|_{L^2(\mathbb{R}^d)}\\
&&\quad\leq \Big\|\Big(\sum_{\nu}|G_{\nu,\,b}^sf|^2\Big)^{1/2}\Big\|_{L^2(\mathbb{R}^d)}\Big(\sum_{\nu}
\Big\|\sum_jT_{j\nu}^sH_{j-s}\Big\|_{L^2(\mathbb{R}^d)}^2\Big)^{1/2},
\end{eqnarray*}
where in the last inequality, we have invoked Plancherel's theorem and the fact that $$\|V_mf\|_{L^2(\mathbb{R}^d)}=\|\widehat{V_mf}\|_{L^2(\mathbb{R}^d)}=\|\eta(2^m\cdot)\widehat{f}\|_{L^2(\mathbb{R}^d)}\leq \|f\|_{L^2(\mathbb{R}^d)}.$$ If we can prove that
\begin{eqnarray}\label{equation3.9}
\Big\|\Big(\sum_{\nu}|G_{\nu,\,b}^sf|^2\Big)^{1/2}\Big\|^2_{L^2(\mathbb{R}^d)}\lesssim 2^{s\gamma(d-1)-s\gamma/2}\|f\|_{L^2(\mathbb{R}^d)}^2,
\end{eqnarray}
the inequalities (\ref{equation3.7}) and (\ref{equation3.8}) then follow  from duality directly.

To prove (\ref{equation3.9}), we will employ an observation of Coifman, Rochberg and Weiss
(see \cite[pp. 620-621]{crw}), which shows that certain weighted $L^p(\mathbb{R}^d)$ estimates
for linear operators imply the $L^p(\mathbb{R}^d)$ estimates for the corresponding commutators, see also \cite[Section 7]{hp}. We present the details here mainly to make the bound clearer.
We can verify that
\begin{eqnarray}\label{eq3.multiplier}|\partial_{\xi}^{\alpha}\psi(2^{s\gamma}\langle e^s_{\nu},\,\xi/|\xi|\rangle)|\lesssim 2^{s\gamma(\lfloor\frac{d}{2}\rfloor+1)}|\xi|^{-|\alpha|},\,\,|\alpha|\leq \lfloor\frac{d}{2}\rfloor+1\end{eqnarray}
for all $\nu\in\Lambda_s$.
Let $w\in A_2(\mathbb{R}^d)$ such that $w^{1+\epsilon}\in A_{2}(\mathbb{R}^d)$
for $\epsilon=2d+6$. We then have by Lemma \ref{lem4.1}   that
\begin{eqnarray}\label{equation3.11}
\sum_{\nu\in\Lambda_s}\|G_{\nu}^sf\|_{L^2(\mathbb{R}^d,\,w^{1+\epsilon})}^2&
\lesssim_{d,[w^{1+\epsilon}]_{A_2}} &2^{s\gamma(d-1)}2^{2s\gamma(\lfloor\frac{d}{2}\rfloor+1)}\|f\|_{L^2(\mathbb{R}^d,\,w^{1+\epsilon})}^2\\
&\lesssim_{d,[w^{1+\epsilon}]_{A_2}}&2^{s\gamma(2d+1)}\|f\|_{L^2(\mathbb{R}^d, w^{1+\epsilon})}^2.\nonumber
\end{eqnarray}Note that $f\rightarrow \big(\sum_{\nu}|G_{\nu}^sf|^2\big)^{1/2}$ is sublinear.  Applying interpolation theorem of Stein and Weiss \cite{stwe}, we deduce from (\ref{equation3.3}) and (\ref{equation3.11}) that
\begin{eqnarray}\label{equation3.12}
\Big\|\Big(\sum_{\nu}|G_{\nu}^sf|^2\Big)^{\frac{1}{2}}\Big\|_{L^2(\mathbb{R}^d,w)}
&\lesssim_{d,[w^{1+\epsilon}]_{A_2}} &2^{s\gamma[\frac{d-2}{2}(1-t)+\frac{2d+1}{2}t]}\|f\|_{L^2(\mathbb{R}^d,w)}\\
&\lesssim_{d,[w^{1+\epsilon}]_{A_2}}&2^{s\gamma(\frac{d}{2}-\frac{3}{4})} \|f\|_{L^2(\mathbb{R}^d,w)}.\nonumber
\end{eqnarray}
with $t=\frac{1}{1+\epsilon}$. Now let $b\in {\rm BMO}(\mathbb{R}^d)$. \cite[Lemma 7.3]{hp} tells us that there exists a constant $c_d$ such that
$$[{\rm e}^{(1+\epsilon)2{\rm Re}zb}]_{A_2}\le_d C,\,\,\hbox{if}\,\,|z|\leq \frac{c_d}{2(1+\epsilon)\|b\|_{{\rm BMO}(\mathbb{R}^d)}}.$$
For $z\in\mathbb{C}$, let $$G^z_{\nu,\,s,\,b}f={\rm e}^{zb}G_{\nu}^s({\rm e}^{-zb}f).$$
It is obvious that
$$\|G^z_{\nu,\,s,\,b}f\|_{L^2(\mathbb{R}^d)}=\|G_{\nu}^s({\rm e}^{-zb}f)\|_{L^2(\mathbb{R}^d, {\rm e}^{2{\rm Re}zb})}
$$
As  in \cite[inequality (7.7)]{hp}, we choose $\rho=\frac{c_d}{4(1+\epsilon)\|b\|_{{\rm BMO}(\mathbb{R}^d)}} $ and  have that
\begin{eqnarray*}
\|G_{\nu,b}^sf\|_{L^2(\mathbb{R}^d)}&\le &\frac{1}{2\pi \rho^2}\int_{|z|=\rho}\|G^z_{\nu,\,s,\,b}f\|_{L^2(\mathbb{R}^d)}|dz|\\
&\leq& \frac{1}{2\pi \rho^{\frac{3}{2}}}\Big(\int_{|z|=\rho}\|G^z_{\nu,\,s,\,b}f\|^2_{L^2(\mathbb{R}^d)}|dz|\Big)^{\frac{1}{2}}.\end{eqnarray*}
It now follows from (\ref{equation3.12}) (with $w={\rm e}^{2{\rm Re}zb}$) that,
\begin{eqnarray*}
\sum_{\nu}\|G_{\nu,b}^sf\|^2_{L^2(\mathbb{R}^d)}&\le &\frac{1}{4\pi^2 \rho^3}\int_{|z|=\rho}\sum_{\nu}\|G_{\nu}^s({\rm e}^{-zb}f)\|^2_{L^2(\mathbb{R}^d, {\rm e}^{2{\rm Re}zb})}|dz|\\
&\lesssim&2^{s\gamma(d-\frac{3}{2})}\|f\|_{L^2(\mathbb{R}^d)}^2.
\end{eqnarray*}
This leads to (\ref{equation3.9})   and then completes the proof of Lemma \ref{lem4.3}.
\end{proof}

\begin{lemma}\label{lem4.4} For fixed $k,\,s,\,j,\,\nu$, let $K_{k,j\nu}^s(x,\,y)$ be the kernel of the operator $(I-G_{\nu}^s)W_kT_{j\nu}^s$, namely,
$$(I-G_{\nu}^s)W_kT_{j\nu}^sh(x)=\int_{\mathbb{R}^d}K_{k,j\nu}^s(x,\,y)h(y)dy.
$$
Under the hypothesis of Theorem \ref{thm1.2},
we have that for each $x,y\in \mathbb{R}^d$ and $N_1\in\mathbb{N}$,
\begin{eqnarray*}|K_{k,j\nu}^s(x,\,y)|&\lesssim_{N_1}& 2^{s\gamma(N_1+2N)}2^{(-j+k)N_1}2^{-j}2^{-kd}\|\Omega\|_{L^{\infty}(S^{d-1})}\\
&&\times \int_{S^{d-1}}|\Gamma_{\nu}^s(\theta)|\int_{2^{j-1}}^{2^{j+1}}\frac{1}{(1+2^{-2k}|x-y-r\theta|^2)^N}dr d\theta,\end{eqnarray*}
where and in the following, $N=\lfloor d/2\rfloor+1$.
\end{lemma}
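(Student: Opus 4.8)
The idea is to estimate the kernel $K^s_{k,j\nu}(x,y)$ by unwinding the composition $(I-G^s_\nu)W_kT^s_{j\nu}$ on the Fourier side and exploiting the near‑orthogonality between the angular support of $\widehat{K^s_{j\nu}}$ (a thin cap of aperture $\sim 2^{-s\gamma}$ around $\pm e^s_\nu$) and the frequency localisation $W_k$ together with the multiplier $1-\psi(2^{s\gamma}\langle e^s_\nu,\xi/|\xi|\rangle)$. First I would write the kernel explicitly: since $T^s_{j\nu}$ is convolution with $K^s_{j\nu}(x)=\Omega(x')|x|^{-d}\phi_j(x)\Gamma^s_\nu(x')$, passing to polar coordinates gives $K^s_{j\nu}(x)=\int_{S^{d-1}}\Omega(\theta)\Gamma^s_\nu(\theta)\int_{2^{j-1}}^{2^{j+1}} r^{-1}\phi(2^{-j}r)\,\delta(x-r\theta)\,dr\,d\theta$ in the distributional sense, so that
$$K^s_{k,j\nu}(x,y)=\int_{S^{d-1}}\Omega(\theta)\Gamma^s_\nu(\theta)\int_{2^{j-1}}^{2^{j+1}}r^{-1}\phi(2^{-j}r)\,L_{k,\nu}^s(x-y-r\theta)\,dr\,d\theta,$$
where $L^s_{k,\nu}$ is the convolution kernel of the multiplier operator with symbol $m^s_{k,\nu}(\xi)=\varphi_k(\xi)\bigl(1-\psi(2^{s\gamma}\langle e^s_\nu,\xi/|\xi|\rangle)\bigr)$. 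Since $\|\Omega\|_{L^\infty}$ factors out and $|r^{-1}\phi(2^{-j}r)|\lesssim 2^{-j}$ on the support, it remains to bound $L^s_{k,\nu}$.

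The core estimate is the pointwise bound $|L^s_{k,\nu}(z)|\lesssim_{N_1} 2^{s\gamma(N_1+2N)}2^{kN_1}2^{-jN_1}2^{-kd}(1+2^{-2k}|z|^2)^{-N}$ — wait, more precisely one wants, after inserting the constraint $|z|\sim 2^j$ coming from the $r\theta$ shift, the decay $(1+2^{-2k}|z-r\theta|^2)^{-N}$ with a gain $2^{(-j+k)N_1}$. The plan is to prove this by integration by parts in $\xi$. The symbol $m^s_{k,\nu}$ is supported in the annulus $|\xi|\sim 2^{-k}$, so naively $|L^s_{k,\nu}(z)|\lesssim 2^{-kd}$ and, after $N$ integrations by parts exploiting $|\partial^\alpha_\xi m^s_{k,\nu}(\xi)|\lesssim 2^{s\gamma|\alpha|}2^{k|\alpha|}$ (the $2^{s\gamma|\alpha|}$ factor from differentiating $\psi(2^{s\gamma}\cdot)$, capped at $|\alpha|\le N$, giving the $2^{2s\gamma N}$ in the statement after absorbing the $\le N$ into a single power), we obtain the decay factor $(1+2^{-2k}|z|^2)^{-N}$ at the cost of $2^{2s\gamma N}$. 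This handles everything \emph{except} the crucial extra gain $2^{(-j+k)N_1}$, which must come from the angular mismatch: $\widehat{K^s_{j\nu}}$ sees only directions within $2^{-s\gamma}$ of $\pm e^s_\nu$, while $1-\psi(2^{s\gamma}\langle e^s_\nu,\xi/|\xi|\rangle)$ \emph{vanishes} there (recall $\psi\equiv 1$ on $[-2,2]$ and the cap $\Gamma^s_\nu$ forces $|\langle e^s_\nu,\theta\rangle|$ close to $1$). So in fact $m^s_{k,\nu}$ composed against the cap is supported where $\psi$ has already been cut — this is the mechanism that will be turned into the factor $2^{(-j+k)N_1}$: on the support of the composed operator the relevant frequency/space scales are $2^{-k}$ and $2^j$, and each integration by parts in the \emph{tangential} directions, using that $K^s_{j\nu}$ is (essentially) a surface measure on the cap of radius $2^j$, produces a factor $2^{-k}2^{-j}\cdot 2^{s\gamma}$ — no wait, the clean way is to integrate by parts in $\xi$ directly in the representation of $K^s_{k,j\nu}(x,y)$, using that $(r\theta)\cdot\nabla_\xi e^{i(x-y)\cdot\xi}$ and the fact that $|r\theta|\sim 2^j$ is much larger than $2^k$ (the spatial scale of $L^s_{k,\nu}$), each such integration gaining $2^{-j}$ and costing $2^k$ from the symbol annulus and $2^{s\gamma}$ from $\Gamma^s_\nu$, and iterating $N_1$ times.

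Concretely the steps are: (i) write $K^s_{k,j\nu}(x,y)$ as the double integral above with $L^s_{k,\nu}=(m^s_{k,\nu})^\vee$; (ii) in the oscillatory integral $L^s_{k,\nu}(x-y-r\theta)=\int e^{i(x-y-r\theta)\cdot\xi}m^s_{k,\nu}(\xi)\,d\xi$ integrate by parts $N$ times against $(1+2^{-2k}|x-y-r\theta|^2)$ to extract the decay $(1+2^{-2k}|x-y-r\theta|^2)^{-N}$, using $|\partial^\alpha_\xi m^s_{k,\nu}|\lesssim 2^{s\gamma(N_1+2N)}$‑type bounds on $|\xi|\sim 2^{-k}$ to control derivatives, which produces the $2^{2s\gamma N}2^{-kd}$ prefactor; (iii) to gain $2^{(-j+k)N_1}$, observe the factor $e^{-ir\theta\cdot\xi}$ and integrate by parts $N_1$ times in $\xi$ exploiting $|r\theta|\sim 2^j\gg 2^k\sim|\xi|^{-1}$, each step contributing $2^{-j}$ (from $|r\theta|^{-1}$) $\times\,2^{k}$ (from $\partial_\xi$ on the $2^{-k}$‑scale symbol) $\times\,2^{s\gamma}$ (from the extra $\partial_\xi$ landing on $\Gamma^s_\nu$ or $\psi(2^{s\gamma}\cdot)$, bounded using the cap geometry), so that $N_1$ iterations give $2^{(-j+k)N_1}2^{s\gamma N_1}$; (iv) collect $\|\Omega\|_{L^\infty}$, $\int_{S^{d-1}}|\Gamma^s_\nu(\theta)|\,d\theta$, and $|r^{-1}\phi(2^{-j}r)|\lesssim 2^{-j}$ from the outer integrals and assemble the claimed bound.

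The main obstacle, and the step that needs genuine care rather than bookkeeping, is step (iii): justifying that the two integrations by parts (the one giving spatial decay at scale $2^k$ and the one giving the $2^{-j+k}$ gain from the large shift $r\theta$) can be performed \emph{simultaneously} without the derivative counts colliding, and verifying that every time a $\xi$‑derivative hits $1-\psi(2^{s\gamma}\langle e^s_\nu,\xi/|\xi|\rangle)$ or $\varphi_k$ the cost is no worse than $2^{s\gamma}\cdot 2^{k}$ on the relevant support — this is where the constants $2^{s\gamma(N_1+2N)}$ and the exponent $2N$ (rather than $N_1+N$) in the statement are generated, and one must check that $N_1$ can be taken arbitrarily large while $N=\lfloor d/2\rfloor+1$ stays fixed so that the $\xi$‑integral over $|\xi|\sim 2^{-k}$ converges. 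Once the kernel bound is in hand the lemma is immediate; the inequality is stated for all $x,y$ and all $N_1\in\mathbb N$, so no summation or further estimation is needed here.
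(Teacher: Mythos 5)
Your reduction of the kernel to the oscillatory integral in $(\theta,r,\xi)$ with symbol $L_{k,s,\nu}(\xi)=(1-\psi(2^{s\gamma}\langle e_\nu^s,\xi/|\xi|\rangle))\varphi_k(\xi)$, and your step (ii) (the $(I-2^{-2k}\Delta_\xi)^N$ integration by parts producing the decay $(1+2^{-2k}|x-y-r\theta|^2)^{-N}$, the factor $2^{-kd}$ from the measure of $\{|\xi|\sim 2^{-k}\}$, and a loss $2^{2s\gamma N}$) are exactly as in the paper. The genuine gap is your step (iii), precisely the step you flag as delicate. Integrating by parts in $\xi$ against the factor ${\rm e}^{-ir\langle\theta,\xi\rangle}$ cannot yield a gain of $2^{-j}$ per step: the $\xi$-derivative also falls on ${\rm e}^{i\langle x-y,\xi\rangle}$, producing a factor of size $|x-y|/|r\theta|$, and in the only region that matters (namely $|x-y-r\theta|\lesssim 2^k$, where the decay factor from step (ii) gives nothing) one has $|x-y|\approx r\sim 2^j$, so this factor is of order $1$. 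Equivalently, the full $\xi$-gradient of the phase is $x-y-r\theta$, which may vanish; all the decay obtainable from $\xi$-integration by parts is already encoded in $(1+2^{-2k}|x-y-r\theta|^2)^{-N}$, and no additional factor $2^{(-j+k)N_1}$ can be extracted on the Fourier side. Your heuristic for the angular mismatch is also reversed: $1-\psi(2^{s\gamma}\langle e_\nu^s,\xi/|\xi|\rangle)$ vanishes where $\xi$ is nearly \emph{orthogonal} to $e_\nu^s$ (which is where $\widehat{K^s_{j\nu}}$ concentrates) and equals $1$ near $\xi/|\xi|=\pm e_\nu^s$.

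The paper (following Ding--Lai and Seeger) obtains the factor $2^{(-j+k)N_1}$ by integrating by parts $N_1$ times in the \emph{radial} variable $r$. The phase is linear in $r$ with slope $-\langle\theta,\xi\rangle$, and on the joint support one has the lower bound $|\langle\theta,\xi\rangle|\gtrsim 2^{-s\gamma}|\xi|\gtrsim 2^{-s\gamma}2^{-k}$: the cutoff $1-\psi$ forces $|\langle e_\nu^s,\xi/|\xi|\rangle|\geq 2\cdot 2^{-s\gamma}$, while $\Gamma_\nu^s(\theta)\neq 0$ forces $|\theta-e_\nu^s|\lesssim 2^{-s\gamma}$, so $|\langle\theta,\xi/|\xi|\rangle|\gtrsim 2^{-s\gamma}$. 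Each such integration by parts therefore gains $|\langle\theta,\xi\rangle|^{-1}\lesssim 2^{s\gamma}2^{k}$ and costs $2^{-j}$ through $(\phi(2^{-j}r)r^{-1})^{(N_1)}\lesssim 2^{-j(N_1+1)}$, which is exactly the factor $2^{s\gamma N_1}2^{(k-j)N_1}2^{-j}$ in the statement; it is then compatible with the subsequent $(I-2^{-2k}\Delta_\xi)^N$ integration by parts because the extra symbol $(\langle\theta,\xi\rangle)^{-N_1}$ satisfies $|(I-2^{-2k}\Delta_\xi)^N\big((\langle\theta,\xi\rangle)^{-N_1}L_{k,s,\nu}(\xi)\big)|\lesssim_{N_1}2^{(s\gamma+k)N_1+2s\gamma N}$ on $|\xi|\sim 2^{-k}$. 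Without replacing your $\xi$-side argument in step (iii) by this radial integration by parts (or an equivalent use of the lower bound on $\langle\theta,\xi\rangle$), the claimed gain $2^{(-j+k)N_1}$ is not established and the proof is incomplete.
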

\begin{proof} We follow the argument in \cite[Section 4.2]{dinglai}. Let $$L_{k,\,s,\,\nu}(\xi)=(1-\psi(2^{s\gamma}\langle \xi/|\xi|, e_{\nu}^s\rangle))\varphi_{k}(\xi),$$ and write
\begin{eqnarray*}
K_{k,j\nu}^s(x,\,y)&=&\frac{1}{(2\pi)^d}\int_{\mathbb{R}^d}\int_{\mathbb{R}^d}{\rm e}^{i\langle x-z,\xi\rangle}L_{k,s,\nu}(\xi)d\xi K^s_{j\nu}(z-y)dz\\
&=&\frac{1}{(2\pi)^d}\int_{S^{d-1}}\Omega(\theta)\Gamma_{\nu}^s(\theta)\\
&&\quad\Big\{
\int_{\mathbb{R}^d}\int^{\infty}_0{\rm e}^{i\langle x-y-r\theta,\xi\rangle}\phi(2^{-j}r)r^{-1}drL_{k,s,\nu}(\xi)d\xi\Big\}d\theta.
\end{eqnarray*}
Recall that ${\rm supp}\phi\subset \{1/2\leq |z|\leq 2\}$. Integrating by parts with $r$, we deduce that
\begin{eqnarray*}\int^{\infty}_0{\rm e}^{i\langle x-y-r\theta,\xi\rangle}\phi(2^{-j}r)r^{-1}dr
=\int^{\infty}_0{\rm e}^{i\langle x-y-r\theta,\xi\rangle}(i\langle \theta,\xi\rangle)^{-N_1}
(\phi(2^{-j}r)r^{-1})^{(N_1)}dr.
\end{eqnarray*}
On the other hand, integrating by parts with $\xi$ leads to that
\begin{eqnarray*}
&&\int_{\mathbb{R}^d}{\rm e}^{i\langle x-y-r\theta,\xi\rangle}(i\langle \theta,\xi\rangle)^{-N_1}L_{k,s,\,\nu}(\xi)d\xi\\
&&\quad=\int_{\mathbb{R}^d}{\rm e}^{i\langle x-y-r\theta,\,\xi\rangle} \frac{(I-2^{-2k}\Delta_{\xi})^N}{(1+2^{-2k}|x-y-r\theta|^2)^N}\big(L_{k,\,s,\,\nu}(\xi)(i\langle \theta,\,\xi\rangle)^{-N_1}\big)d\xi.
\end{eqnarray*}
Therefore,
\begin{eqnarray*}K_{k,j\nu}^s(x,\,y)&=&\frac{1}{(2\pi)^d}\int_{S^{d-1}}\Omega(\theta)\Gamma_{\nu}^s(\theta)
\int_{\mathbb{R}^d}{\rm e}^{i\langle x-y-r\theta,\,\xi\rangle}\int_{0}^{\infty}(\phi(2^{-j}r)r^{-1})^{(N_1)}\\
&&\times\frac{(I-2^{-2k}\Delta_{\xi})^N}{(1+2^{-2k}|x-y-r\theta|^2)^N}\big(L_{k,\,s,\,\nu}(\xi)(i\langle \theta,\,\xi\rangle)^{-N_1}\big)drd\xi d\theta.
\end{eqnarray*}
Since
$$|(I-2^{-2k}\Delta_{\xi})^N\big((\langle\theta,\xi\rangle)^{-N_1} L_{k,\,s,\,\nu}(\xi)\big)|\lesssim_{N_1}2^{(s\gamma+k)N_1+2s\gamma N},
$$
see  \cite[(4.12)]{dinglai}, we obtain  that
\begin{eqnarray*}
|K_{k,j\nu}^s(x,\,y)|&\lesssim_{N_1}&2^{(s\gamma+k)N_1+2s\gamma N}\int_{S^{d-1}}|\Omega(\theta)\Gamma_{\nu}^s(\theta)|\\
&&\times\int_{2^{-k-1}\leq |\xi|\leq 2^{-k+1}}
\int_{0}^{\infty}\frac{(\phi(2^{-j}r)r^{-1})^{(N_1)}dr}{(1+2^{-2k}|x-y-r\theta|^2)^N}d\xi d\theta\\
&\lesssim_{N_1} &2^{(s\gamma+k)N_1+2s\gamma N}2^{-j(N_1+1)}2^{-kd}\|\Omega\|_{L^{\infty}(S^{d-1})}\\
&&\times \int_{S^{d-1}}|\Gamma_{\nu}^s(\theta)|\int_{2^{j-1}}^{2^{j+1}}\frac{1 }{(1+2^{-2k}|x-y-r\theta|^2)^N}dr d\theta.
\end{eqnarray*}
This leads to our desired conclusion.
\end{proof}
\begin{lemma}\label{lem4.5} For $j,\,m\in\mathbb{Z}$, $\nu\in\Lambda_s$ and $s\in\mathbb{N}$ with $s\geq 100$, let $F_{j,\,\nu,\,m}^s(x,\,y)$ be the kernel of
the operator $V_mT_{j\nu}^s$. Let $Q$ be a cube with $\ell(Q)=2^{j-s}$. Then for any $x\in\mathbb{R}^d$, $y,\,y_0\in Q$,
\begin{eqnarray*}|F_{j,\nu,m}^s(x,y)-F_{j,\nu,m}^s(x,y_0)|&\lesssim & 2^{-s-m-md}\|\Omega\|_{L^{\infty}(S^{d-1})}\int_{S^{d-1}}\Gamma_{\nu}^s(\theta)\int^{2^{j+1}}_{2^{j-1}}\\
&&\Big[\int^1_0\frac{N2^{-m}|x-(ty+(1-t)y_0)-r\theta|dt}{(1+2^{-2m}|x-(ty+(1-t)y_0)-r\theta|^2)^{N+1}}\\
&&+\frac{1}{(1+2^{-2m}|x-y-r\theta|^2\big)^N}\Big]\phi_j(r)drd\theta.
\end{eqnarray*}
\end{lemma}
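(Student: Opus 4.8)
The plan is to exhibit $F_{j,\,\nu,\,m}^s$ as a convolution kernel and then expand to first order in the $y$–variable; this is exactly the information needed for the Calder\'on--Zygmund type cancellation argument used to estimate (\ref{equa3.u3}). Since both $V_m$ and $T_{j\nu}^s$ are convolution operators,
\begin{equation*}
F_{j,\,\nu,\,m}^s(x,\,y)=(\eta_m^{\vee}*K_{j\nu}^s)(x-y),\qquad \eta_m^{\vee}(z)=2^{-md}\eta^{\vee}(2^{-m}z),
\end{equation*}
where $\eta^{\vee}$, the inverse Fourier transform of $\eta$, is a radial Schwartz function and $\eta_m^{\vee}$ is the kernel of $V_m$. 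Passing to polar coordinates exactly as in the proof of Lemma \ref{lem4.4} (cf. \cite[Section 4.2]{dinglai}) gives
\begin{equation*}
F_{j,\,\nu,\,m}^s(x,\,y)=\int_{S^{d-1}}\Omega(\theta)\Gamma_{\nu}^s(\theta)\int_0^{\infty}\eta_m^{\vee}(x-y-r\theta)\,\frac{\phi_j(r)}{r}\,dr\,d\theta,
\end{equation*}
with $\phi_j(r)=\phi(2^{-j}r)$ supported in $[2^{j-1},\,2^{j+1}]$, so that $r^{-1}\lesssim 2^{-j}$ on that range. Note that, unlike in Lemma \ref{lem4.4}, no integration by parts in $r$ is required here, since $V_m$ is a low--frequency multiplier and the Schwartz decay of $\eta^{\vee}$ already furnishes all the decay we need.

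Next I would record the pointwise bounds on $\eta^{\vee}$. Because $\eta\in C_0^{\infty}(\mathbb{R}^d)$ is radial, $\eta^{\vee}$ is radial and Schwartz, so $\nabla\eta^{\vee}(0)=0$; hence, for the fixed exponent $N=\lfloor d/2\rfloor+1$,
\begin{equation*}
|\eta^{\vee}(w)|\lesssim (1+|w|^2)^{-N},\qquad |\nabla\eta^{\vee}(w)|\lesssim \frac{N|w|}{(1+|w|^2)^{N+1}},
\end{equation*}
the $|w|$–factor in the second bound being exactly where radiality enters. Rescaling, $|\nabla\eta_m^{\vee}(z)|=2^{-md-m}\,|(\nabla\eta^{\vee})(2^{-m}z)|\lesssim 2^{-md-m}\,N2^{-m}|z|\,(1+2^{-2m}|z|^2)^{-(N+1)}$.

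The main step is the difference estimate. With $p_t:=x-(ty+(1-t)y_0)-r\theta$, so that $p_1=x-y-r\theta$ and $p_0=x-y_0-r\theta$, the fundamental theorem of calculus gives
\begin{equation*}
\eta_m^{\vee}(x-y-r\theta)-\eta_m^{\vee}(x-y_0-r\theta)=-(y-y_0)\cdot\int_0^1\nabla\eta_m^{\vee}(p_t)\,dt.
\end{equation*}
Since $y,\,y_0\in Q$ with $\ell(Q)=2^{j-s}$ we have $|y-y_0|\lesssim 2^{j-s}$, and the entire segment $t\mapsto ty+(1-t)y_0$ stays in $Q$; substituting this identity in the displayed formula for $F_{j,\,\nu,\,m}^s$, using $|\Omega(\theta)\Gamma_{\nu}^s(\theta)|\le\|\Omega\|_{L^{\infty}(S^{d-1})}\Gamma_{\nu}^s(\theta)$ (recall $\Gamma_{\nu}^s\ge0$), $r^{-1}\lesssim 2^{-j}$, and the rescaled gradient bound, one is left precisely with the first term on the right--hand side of the claimed inequality. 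Here the dyadic factors combine as $2^{j-s}\cdot2^{-j}=2^{-s}$ (this is the source of the gain $2^{-s}$: the side length $\ell(Q)=2^{j-s}$ paired against the homogeneity factor $r^{-1}\asymp2^{-j}$), while $2^{-md-m}\cdot2^{-m}$ supplies the remaining $2^{-m-md}$ of the prefactor together with the extra $2^{-m}$ sitting inside the $t$–integral. The second term in the statement is nonnegative, so the claimed estimate follows a fortiori; it is retained because it is the form convenient for the summation in Section 5 and is, moreover, what the cruder bound $|\nabla\eta^{\vee}(w)|\lesssim(1+|w|^2)^{-N}$ produces after replacing $p_t$ by $x-y-r\theta$ at the cost of an error $\lesssim2^{j-s}$.

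This is a routine calculation and there is no real obstacle; the only points needing attention are (i) the vanishing $\nabla\eta^{\vee}(0)=0$, which is used to extract the $|w|$–factor and is the reason radiality of $\eta$ is invoked, and (ii) the bookkeeping of the dyadic exponents — in particular that all intermediate points $ty+(1-t)y_0$ lie in $Q$, so the estimate is uniform in $t$, and that the $2^{-s}$ decay arises exactly as above.
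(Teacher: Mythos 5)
Your argument is correct, and it reaches the stated bound by a genuinely different (more elementary) route than the paper. The paper keeps the kernel of $V_mT_{j\nu}^s$ as an oscillatory integral in $\xi$, integrates by parts with $(I-2^{-2m}\Delta_{\xi})^N$ to create the weight $(1+2^{-2m}|x-y-r\theta|^2)^{-N}$, and then splits the difference in $y$ into two pieces: one from the difference of exponentials ${\rm e}^{i\langle -y,\xi\rangle}-{\rm e}^{i\langle -y_0,\xi\rangle}$ (bounded by $|y-y_0||\xi|\lesssim 2^{j-s}2^{-m}$, which produces the second term of the statement) and one from the difference of the weights $\Upsilon(x,y,m)^{-1}-\Upsilon(x,y_0,m)^{-1}$ (a gradient estimate along the segment, which produces the first term). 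You instead work entirely in physical space with the exact Schwartz kernel $\eta_m^{\vee}$ of $V_m$, apply the mean value theorem to the kernel itself, and use radiality of $\eta$ to get $\nabla\eta^{\vee}(0)=0$ and hence the bound $|\nabla\eta^{\vee}(w)|\lesssim |w|(1+|w|^2)^{-(N+1)}$; the bookkeeping $|y-y_0|\,r^{-1}\lesssim 2^{j-s}\cdot 2^{-j}=2^{-s}$ and $2^{-md}\cdot 2^{-m}\cdot 2^{-m}$ then lands exactly on the first term of the stated right-hand side, and since the second term is nonnegative the lemma follows a fortiori. This buys a one-term, slightly cleaner estimate at the price of the (correct) observation that $\nabla\eta^{\vee}$ vanishes at the origin, whereas the paper's two-term split avoids that observation and parallels the computation of Lemma \ref{lem4.4}; in Section 5 both terms are treated identically, so nothing is lost either way. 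One small caveat: your closing remark that the cruder bound $|\nabla\eta^{\vee}(w)|\lesssim(1+|w|^2)^{-N}$ would recover the second term ``after replacing $p_t$ by $x-y-r\theta$ at the cost of an error $\lesssim 2^{j-s}$'' needs $2^{-m}|y-y_0|\lesssim 1$ to compare the two weights; this holds in the application ($m=j-\lfloor s/2\rfloor$) but not for arbitrary $m\in\mathbb{Z}$ as in the lemma's statement. Since your main argument does not rely on that remark, this does not affect the proof.
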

\begin{proof}The proof is essentially given in \cite[Section 4.3]{dinglai}. For the sake of self-contained, we include a concise proof. By integrating by parts, we   write
\begin{eqnarray*}
F_{j,\,\nu,m}^s(x,\,y)&=&\frac{1}{(2\pi)^d}\int_{\mathbb{R}^d}\int_{\mathbb{R}^d}{\rm e}^{i\langle x-y-z,\xi\rangle}\eta(2^m\xi)d\xi K_{j\nu}^s(z)dz\\
&=&\frac{1}{(2\pi)^d}\int_{S^{d-1}}\Omega(\theta)\Gamma_{\nu}^s(\theta)\Big\{
\int^{\infty}_0\int_{\mathbb{R}^d}{\rm e}^{i\langle x-y-r\theta,\xi\rangle}\\
&&\times\frac{(I-2^{-2m}\Delta_{\xi})^N\eta(2^m\xi)}{(1+2^{-2m}|x-y-r\theta|^2)^N}d\xi \phi_j(r)r^{-1}dr\Big\}d\theta.\end{eqnarray*}
Let
\begin{eqnarray*}{\rm D}_1(x,\,y,y_0)&=&\frac{1}{(2\pi)^d}\int_{S^{d-1}}\Omega(\theta)\Gamma_{\nu}^s(\theta)\Big\{
\int^{\infty}_0\int_{\mathbb{R}^d}({\rm e}^{i\langle-y,\xi\rangle}-{\rm e}^{i\langle-y_0,\xi\rangle}){\rm e}^{i\langle x-r\theta,\xi\rangle}\\
&&\times\frac{(I-2^{-2m}\Delta_{\xi})^N\eta(2^m\xi)}{(1+2^{-2m}|x-y-r\theta|^2)^N}d\xi \phi_j(r)r^{-1}dr\Big\}d\theta,
\end{eqnarray*}
and
\begin{eqnarray*}&&{\rm D}_2(x,y,y_0)=\frac{1}{(2\pi)^d}\int_{S^{d-1}}\Omega(\theta)\Gamma_{\nu}^s(\theta)\Big\{
\int^{\infty}_0\int_{\mathbb{R}^d} {\rm e}^{i\langle x-y_0-r\theta,\xi\rangle}\\
&&\qquad \times(I-2^{-2m}\Delta_{\xi})^N\eta(2^m\xi)\Big(\frac{1}{\Upsilon(x,y,m)}-\frac{1}{\Upsilon(x,y_0,m)}\Big)d\xi \phi_j(r)r^{-1}dr\Big\}d\theta,
\end{eqnarray*}
where $\Upsilon(x,y,\,m)=(1+2^{-2m}|x-y-r\theta|^2)^N$. We then have that
$$F_{j,\nu,m}^s(x,y)-F_{j,\nu,m}^s(x,y_0)={\rm D}_1(x,\,y,y_0)+{\rm D}_2(x,\,y,y_0).
$$
By the facts that
$$|{\rm e}^{i\langle-y,\xi\rangle}-{\rm e}^{i\langle-y_0,\xi\rangle}|\le |y-y_0||\xi|
$$
and\begin{eqnarray}\label{equa3.nabla}
|(I-2^{-2m}\Delta_{\xi})^N\eta(2^m\xi)|\lesssim \chi_{\{|\xi|\leq 2^{-m}\}}(\xi),
\end{eqnarray}
it follows that
\begin{eqnarray*}|{\rm D}_1(x,y,y_0)|\lesssim2^{j-s-m-md}\int_{S^{d-1}}|\Omega(\theta)\Gamma_{\nu}^s(\theta)|
\int^{\infty}_0\frac{\phi_j(r)r^{-1}dr}{(1+2^{-2m}|x-y-r\theta|^2)^N}d\theta.
\end{eqnarray*}
On the other hand, a trivial computation shows that
\begin{eqnarray*}
&&|\Upsilon(x,y,\,m)-\Upsilon(x,y_0,\,m)|=\Big|\int^1_0\langle\langle y-y_0, \nabla \Upsilon(ty+(1-t)y_0)\rangle\Big|\\
&&\quad\lesssim |y-y_0|2^{-m}\int^1_0\frac{N2^{-m}|x-(ty+(1-t)y_0)-r\theta|}{(1+2^{-2m}|x-(ty+(1-t)y_0)-r\theta|^2)^{N+1}}dt.
\end{eqnarray*}
This, along with inequality (\ref{equa3.nabla}), yields
\begin{eqnarray*}|{\rm D}_2(x,y,y_0)|&\lesssim & 2^{-s-m-md}\|\Omega\|_{L^{\infty}(S^{d-1})}\int_{S^{d-1}}\Gamma_{\nu}^s(\theta)\int^{2^{j+1}}_{2^{j-1}}\\
&& \int^1_0\frac{N2^{-m}|x-(ty+(1-t)y_0)-r\theta|}{(1+2^{-2m}|x-(ty+(1-t)y_0)-r\theta|^2)^{N+1}}dt \phi_j(r)drd\theta.
\end{eqnarray*}
Combining estimates for ${\rm D}_1(x,y,y_0)$ and ${\rm D}_2(x,y,y_0)$ then finishes the proof of Lemma \ref{lem4.5}.
\end{proof}

We are now ready to prove (\ref{equa3.u3}).

\medskip

{\it Proof of the inequality (\ref{equa3.u3}).} We choose $\gamma\in (0,\,\frac{1}{8d})$,  $N_0 \in\mathbb{N}$  such that
\begin{eqnarray}\label{equa3.const}N_0\gamma>16,\,\,\big[\frac{1}{3}-\gamma (3d/2+1)\big]N_0>\frac{3}{2}.\end{eqnarray}
For $b\in {\rm BMO}(\mathbb{R}^d)$ and $m=j-\lfloor \frac{s}{2}\rfloor$, write
\begin{eqnarray*}&&\big(b(x)-\langle b\rangle_Q\big)T^i_{j}h_Q(x)-\sum_{\nu}G_{\nu}^s\big[\big(b-\langle b\rangle_Q\big)T^{i,s}_{j\nu}h_Q\big](x)\\
&&\quad=\sum_{\nu}(I-G_{\nu}^s)\big[\big(b-\langle b\rangle_Q\big)V_mT^{i,s}_{j\nu}h_Q\big](x)\\
&&\qquad+\sum_{\nu}G_{\nu,b}^sT^{i,s}_{j\nu}h_Q(x)-\sum_{\nu}G_{\nu,b}^s (V_mT^{i,s}_{j\nu}h_Q )(x)\\
&&\qquad+\big(b(x)-\langle b\rangle_Q\big)\sum_{\nu}(I-G_{\nu}^s)\Big[\sum_{k<m}W_kT^{i,s}_{j\nu}h_Q\Big](x)\\
&&\quad=:{\rm U}_{31,j}^{i,s}h_Q(x)+{\rm U}_{32,j}^{i,s}h_Q(x)+{\rm U}_{33,j}^{i,s}h_Q(x)+{\rm U}_{34,j}^{i,s}h_Q(x).
\end{eqnarray*}
As in the estimate for ${\rm U}_2$, it follows from (\ref{equation3.7}) in Lemma \ref{lem4.3} that
\begin{eqnarray}\label{equation3.16}
&&\Big|\Big\{x\in\mathbb{R}^d\backslash E:\, \Big|\sum_{i=0}^{\infty}\sum_{s> N_0i}\sum_{j\in\mathbb{Z}}\sum_{Q\in\mathcal{S}_{j-s}}{\rm U}_{32,j}^{i,s}h_Q(x)\Big|>\frac{1}{48}\Big\}\Big|\\
&&\quad\leq \Big(\sum_{i=0}^{\infty}\sum_{s> N_0i}\Big\|\sum_{j\in\mathbb{Z}}\sum_{Q\in\mathcal{S}_{j-s}}\sum_{\nu}G_{\nu,\,b}^sT_{j\nu}^{i,\,s}h_Q\Big\|_{L^2(\mathbb{R}^d\backslash E)}\Big)^2\lesssim \int_{\mathbb{R}^d}|f(x)|dx,\nonumber
\end{eqnarray}
and from (\ref{equation3.8}) in Lemma \ref{lem4.3} that
\begin{eqnarray}\label{equation3.17}
&&\Big|\Big\{x\in\mathbb{R}^d\backslash E:\, \Big|\sum_{i=0}^{\infty}\sum_{s> N_0i}\sum_{j\in\mathbb{Z}}\sum_{Q\in\mathcal{S}_{j-s}}{\rm U}_{33,j}^{i,s}h_Q(x)\Big|>\frac{1}{48}\Big\}\Big|\\
&&\quad\leq \Big(\sum_{i=0}^{\infty}\sum_{s> N_0i}\Big\|\sum_{j\in\mathbb{Z}}\sum_{Q\in\mathcal{S}_{j-s}}\sum_{\nu}G_{\nu,\,b}^sV_mT_{j\nu}^{i,\,s}h_Q\Big\|_{L^2(\mathbb{R}^d\backslash E)}\Big)^2\lesssim \int_{\mathbb{R}^d}|f(x)|dx.\nonumber
\end{eqnarray}

We now estimate ${\rm U}_{34,j}^{i,s}h_Q$.
For each cube $Q\in\mathcal{S}_{j-s}$,  $y\in Q$, $2^{j-1}\leq r\leq 2^{j+1}$, and $ \theta\in S^{d-1}$, denote by $Q_{y+r\theta,\,k}$ the cube centered at $y+r\theta$ and having side length $2^k$.
We have that
\begin{eqnarray*}
\big|\langle b\rangle_Q-\langle b\rangle_{Q_{y+r\theta,\,k}}\big|&\lesssim&\big|\langle b\rangle_Q-\langle b\rangle_{Q_{y+r\theta,\,j-s}}\big|+
\big|\langle b\rangle_{Q_{y+r\theta,\,k}}-\langle b\rangle_{Q_{y+r\theta,\,j-s}}\big|\\
&\lesssim &|k-j|+s.
\end{eqnarray*}
Recall that $N=\lfloor d/2\rfloor+1$. A trivial computation yields
$$\int_{\mathbb{R}^d}\frac{|b(x)-\langle b\rangle_{Q(y+r\theta,\,k)}|}{(1+2^{-2k}|x-y-r\theta|^2)^N}dx\lesssim 2^{kd}.$$
This, in turn, implies that
\begin{eqnarray}\label{equa5.11}
\int_{\mathbb{R}^d}\frac{|b(x)-\langle b\rangle_Q|}{(1+2^{-2k}|x-y-r\theta|^2)^N}dx&\leq & \int_{\mathbb{R}^d}\frac{|b(x)-\langle b\rangle_{Q(y+r\theta,\,k)}|}{(1+2^{-2k}|x-y-r\theta|^2)^N}dx\\
&&+\int_{\mathbb{R}^d}\frac{|k-j|+s}{(1+2^{-2k}|x-y-r\theta|^2)^N}dx\nonumber\\
&\lesssim&2^{kd}(|k-j|+s).\nonumber
\end{eqnarray}
From Lemma \ref{lem4.4}, we deduce that
\begin{eqnarray*}
&&\|(b-\langle b\rangle_Q)(I-G_{\nu}^s)W_kT_{j\nu}^{i,s}h_Q\|_{L^1(\mathbb{R}^d)}\\
&&\quad\lesssim 2^{(s\gamma+k)N_1+2s\gamma N}2^{-j(N_1+1)}2^{-kd}\|\Omega_i\|_{L^{\infty}(S^{d-1})}\int_{\mathbb{R}^d}\int_{S^{d-1}}|\Gamma_{\nu}^s(\theta)|\nonumber\\
&&\qquad\times \int_{2^{j-1}}^{2^{j+1}}\int_{\mathbb{R}^d}\frac{|b(x)-\langle b\rangle_Q|}{(1+2^{-2k}|x-y-r\theta|^2)^N}dx dr d\theta|h_Q(y)|dy \nonumber\\
&&\quad\lesssim 2^{-s\gamma(d-1)}2^{(-j+k)N_1}2^{s\gamma(N_1+2N)}\|\Omega_i
\|_{L^{\infty}(S^{d-1})}(|k-j|+s)\|h_Q\|_{L^1(\mathbb{R}^d)},\nonumber
\end{eqnarray*}
since $\|\Gamma_{\nu}^s\|_{L^1(S^{d-1})}\lesssim 2^{-\gamma s(d-1)}$. Note that for $s\in \mathbb{N}$, $s\lesssim 2^{\gamma s}$. Then
\begin{eqnarray}\label{equation3.18}
&&\Big|\Big\{x\in\mathbb{R}^d\backslash E:\, \Big|\sum_{i=0}^{\infty}\sum_{s> Ni}\sum_{j\in\mathbb{Z}}\sum_{Q\in\mathcal{S}_{j-s}}{\rm U}_{34,j}^{i,s}h_{Q} (x)\Big|>\frac{1}{48}\Big\}\Big|\\
&&\quad\lesssim \sum_{i=0}^{\infty}\sum_{s> N_0i}\sum_{j\in\mathbb{Z}}\sum_{\nu}\sum_{k<m}\sum_{Q\in\mathcal{S}_{j-s}}\|(b-\langle b\rangle_Q)(I-G_{\nu}^s)W_kT_{j\nu}^{i,s}h_Q\|_{L^1(\mathbb{R}^d)}\nonumber\\
&&\quad\lesssim \sum_{i=0}^{\infty}2^i\sum_{s> N_0i}2^{s\gamma(N_1+2N)}\sum_{j}\sum_{k<m}2^{(-j+k)N_1}(j-k+s)\sum_{Q\in \mathcal{S}_{j-s}}\|h_Q\|_{L^1(\mathbb{R}^d)}\nonumber\\
&&\quad \lesssim\sum_{i=0}^{\infty}2^i\sum_{s> N_0i}2^{s\gamma(N_1+2N)}2^{s\gamma}2^{-N_1s/2}\sum_{Q}\|h_Q\|_{L^1(\mathbb{R}^d)}\lesssim \int_{\mathbb{R}^d}|f(x)|dx,\nonumber
\end{eqnarray}
provided we choose $N_1\in\mathbb{N}$ in Lemma \ref{lem4.4} such that
\begin{eqnarray}\label{equa4.const}
N_0\big(N_1/2-\gamma N_1-2N\gamma-\gamma)>1.
\end{eqnarray}

It remains to consider term $(I-G_{\nu}^s)(b-\langle b\rangle_{Q})V_mT_{j\nu}^{i,s}h_{Q}(x)$. For each cube $Q\in \mathcal{S}_{j-s}$, let $y_Q$ be the center of $Q$. Applying Lemma \ref{lem4.5} and the vanishing moment of $h_Q$, we get that for each $Q\in \mathcal{S}_{j-s}$,
\begin{eqnarray*}&&\big\|(b-\langle b\rangle_{Q})V_mT_{j\nu}^{i,s}h_{Q}\big\|_{L^1(\mathbb{R}^d)}\\
&&\quad\lesssim\int_{\mathbb{R}^d}\int_{\mathbb{R}^d}|F_{j,\,\nu,\,m}^s(x,\,y)-F_{j,\,\nu,\,m}^s(x,\,y_Q)||b(x)-\langle b\rangle_Q|dx|h_Q(y)|dy\\
&&\quad\lesssim 2^{-s-m-md}\|\Omega_i\|_{L^{\infty}(S^{d-1})}\int_{\mathbb{R}^d}\int_{S^{d-1}}\Gamma_{\nu}^s(\theta)\int^{\infty}_0\\
&&\qquad\Big\{\int_{\mathbb{R}^d}|b(x)-\langle b\rangle_Q|\int^1_0\frac{N2^{-m}|x-(ty+(1-t)y_Q)-r\theta|}{(1+2^{-2m}|x-(ty+(1-t)y_Q)-r\theta|^2)^{N+1}}dtdx\\
&&\qquad+\int_{\mathbb{R}^d}
\frac{|b(x)-\langle b\rangle_Q|}{(1+2^{-2m}|x-y-r\theta|^2\big)^N}dx\Big\}\phi_j(r)drd\theta|h_Q(y)|dy.
\end{eqnarray*}
As in inequality (\ref{equa5.11}), we have that for each $t\in [0,\,1]$,
$$
\int_{\mathbb{R}^d}|b(x)-\langle b\rangle_Q|\frac{N2^{-m}|x-(ty+(1-t)y_Q)-r\theta|}{(1+2^{-2m}|x-(ty+(1-t)y_Q)-r\theta|^2)^{N+1}}dx\\
\lesssim
2^{md}s.$$
This, along with (\ref{equa5.11}), gives us that
$$\big\|(b-\langle b\rangle_{Q})V_mT_{j\nu}^{i,s}h_{Q}\big\|_{L^1(\mathbb{R}^d)}\lesssim
2^{\lfloor \frac{s}{2}\rfloor-s}s2^{-s\gamma(d-1)}\|\Omega_i\|_{L^{\infty}(S^{d-1})}\|h_Q\|_{L^1(\mathbb{R}^d)}.
$$
Let $L^{i,s}_{\nu,m}(x)=\sum_{j\in\mathbb{Z}}\sum_{Q\in \mathcal{S}_{j-s}}(b-\langle b\rangle_{Q})V_mT_{j\nu}^{i,s}h_{Q}(x)$. We then have
\begin{eqnarray*}
&&\sum_{i=0}^{\infty}2^{i/2}\sum_{s> N_0i}2^{\frac{s}{6}}\sum_{\nu}2^{s\gamma(d-1)}2^{s\gamma(\frac{d}{2}+1)}\|L_{\nu,m}^{i,s}\|_{L^1(\mathbb{R}^d)}\\
&&\quad\lesssim \sum_{i=0}^{\infty}2^{\frac{3i}{2}}\sum_{s> N_0i}2^{\frac{s}{6}}2^{-\frac{s}{2}}2^{s\gamma d}2^{s\gamma(\frac{d}{2}+1)}\sum_{Q}\|h_Q\|_{L^1(\mathbb{R}^d)}\lesssim\|f\|_{L^1(\mathbb{R}^d)},
\end{eqnarray*}
since $N_0$ and $\gamma$ satisfies (\ref{equa3.const}).
It follows from the pigeonhole principle, inequality (\ref{eq3.multiplier}) and Lemma
\ref{lem4.1}  that for some constant $C_0$,
\begin{eqnarray}\label{equation3.19}
&&\Big|\Big\{x\in\mathbb{R}^d\backslash E:\,\Big|\sum_{i=0}^{\infty}\sum_{s> N_0i}\sum_{j\in\mathbb{Z}}\sum_{Q\in\mathcal{S}_{j-s}}{\rm U}_{31,j}^{i,s}h_Q(x)\Big|>\frac{1}{48}\Big\}\Big|\\
&&\quad\lesssim \sum_{i=0}^{\infty}\sum_{s>N_0i}\sum_{\nu}\Big|\Big\{x\in\mathbb{R}^d\backslash E:\,|(I-G_{\nu}^s)L_{\nu,\,m}^{i,s}(x)|>C_02^{-i/2-s/6-\gamma s(d-1)}\Big\}\Big|\nonumber\\
&&\quad\lesssim \sum_{i=0}^{\infty}2^{i/2}\sum_{s> N_0i}2^{\frac{s}{6}}\sum_{\nu}2^{s\gamma(d-1)}2^{s\gamma(\frac{d}{2}+1)}\|L_{\nu,m}^{i,s}\|_{L^1(\mathbb{R}^d)}\lesssim\|f\|_{L^1(\mathbb{R}^d)}.\nonumber
\end{eqnarray}
Combining estimates (\ref{equation3.16}), (\ref{equation3.17}), (\ref{equation3.18}) and (\ref{equation3.19}) yields
\begin{eqnarray*}
|\{x\in\mathbb{R}^d\backslash E:\,|{\rm U}_3h(x)|>\frac{1}{12}\}|\lesssim \|f\|_{L^1(\mathbb{R}^d)}.
\end{eqnarray*}
This completes the proof of (\ref{equa3.u3}).\qed

\medskip

{\bf Acknowledgement} The authors would like to express their sincerely
thanks to the referee for his/her valuable remarks and suggestions, which made this paper more readable.
Also,
The authors would like to thank Dr. Israel. P. Rivera-R\'{i}os for helpful comments.

\bibliographystyle{amsplain}

\end{document}